\documentclass[10pt]{article}
\usepackage[utf8]{inputenc}
\usepackage[T1]{fontenc}

\usepackage[margin=2.5cm]{geometry}
\usepackage{multicol}
\usepackage{latexsym,mathtools,tikz,microtype,comment}

\usepackage{enumitem,url, hyperref}
\usepackage{tikz-cd}
\usepackage{mathrsfs}
\usepackage{cancel}

\usepackage{amsfonts,amsmath,amssymb,amsthm}

\newcommand{\PP}{\mathbb P}
\newcommand{\RR}{\mathbb R}
\newcommand{\ZZ}{\mathbb Z}
\newcommand{\CC}{\mathbb C}

\newcommand{\dual}[1]{#1^{\perp_h}}

\newtheorem{theorem}{Theorem}[section]
\newtheorem{lemma}[theorem]{Lemma}
\newtheorem{proposition}[theorem]{Proposition}
\newtheorem{corollary}[theorem]{Corollary}

\newtheorem{question}[theorem]{Question}

\theoremstyle{definition}
 
\newtheorem{remark}[theorem]{Remark}

\title{Kochen--Specker Configurations from Grids on Dual Quadrics}
\author{Giuseppe Favacchio}
\date{}

\begin{document}
\maketitle
\begin{center}
    \normalsize Dipartimento di Ingegneria, Universit\`a degli Studi di Palermo\\
Viale delle Scienze, 90128 Palermo, Italy\\
\texttt{giuseppe.favacchio@unipa.it}
\end{center}

\begin{abstract}
We develop a geometric framework for constructing and organizing Kochen--Specker configurations in real four-dimensional space. The construction uses finite grids on pairs of smooth quadrics related by Euclidean polarity. Their incidence geometry directly produces orthogonal measurement contexts and parity proofs of quantum contextuality, yielding infinite families of configurations and a geometric interpretation and extension of a previously known cyclic construction. For a distinguished subfamily, the same geometry admits a canonical completion determined by secant lines. The first two instances of this completion recover the exceptional root configurations of types $F_4$ and $H_4$, while the smallest case also recovers the Cabello configuration and its embedding in the Peres configuration. Thus several prominent four-dimensional contextual configurations, previously obtained from different constructions, arise from a single projective-geometric mechanism.
\end{abstract}

\thanks{\emph{keywords}:	Kochen--Specker configurations, contextuality, grids, quadrics, polarity, finite point configurations, geproci sets}

\thanks{\emph{MSC codes}:
	81P13, 
	14N20 
}

\section{Introduction}
\label{sec:introduction}

The Kochen--Specker theorem is one of the fundamental obstructions to a
noncontextual hidden-variable description of quantum mechanics
\cite{KochenSpecker1967}. In its finite geometric formulation, one
considers a collection of rays in a Hilbert space together with
distinguished orthogonal bases, or contexts, and asks whether it is
possible to assign $0$--$1$ values to the rays, independently of the
context in which they occur, with exactly one ray assigned the value
$1$ in each context. Finite configurations for which no such
assignment exists provide concrete realizations of Kochen--Specker
contextuality,  \cite{AbramskyBrandenburger2011,AcinEtAl2015,CabelloSeveriniWinter2014}. Beyond its foundational role, contextuality has become
a structural resource in quantum information \cite{BermejoVegaEtAl2017,Amaral2019,HowardEtAl2014}, making systematic
constructions of finite contextual measurement scenarios of
independent interest. We refer to \cite{BudroniEtAl2022} for a modern
account of the subject and its connections with quantum information,
graph theory, and experiments.

Kochen--Specker configurations are usually described through lists of
vectors, orthogonality graphs or hypergraphs, or combinatorial
constructions designed to produce the required contexts. Our point of
departure is that several four-dimensional examples carry a
substantial projective geometry which is not visible from the
orthogonality hypergraph alone. The purpose of this paper is to make
this geometry systematic. The basic objects are finite grids on smooth
quadrics in $\PP^3(\mathbb R)$, while the Euclidean inner product
supplies a polarity which converts part of their projective incidence
structure into orthogonality. Thus the construction separates two
geometric mechanisms: projective incidence organizes the
configurations, while Euclidean polarity turns selected incidences
into compatible measurement contexts.

Dimension four contains several of the most prominent finite examples.
Peres constructed a highly symmetric configuration of $24$ rays
\cite{Peres1991}, while Cabello, Estebaranz and Garc\'ia-Alcaine later
exhibited an $18$-ray, $9$-context parity proof
\cite{CabelloEstebaranzGarciaAlcaine1996}. The number of rays is
minimal: the Peres conjecture that every Kochen--Specker set contains
at least $18$ rays was proved in \cite{XuChenGuhne2020}. The relation
between the Cabello and Peres configurations is itself highly
structured: the Peres system contains a large family of parity proofs,
including configurations of type $18$--$9$
\cite{WaegellAravind2011Peres}. Further highly symmetric examples arise
from finite root systems and regular polytopes, notably from the
$600$-cell and the root system $H_4$
\cite{WaegellAravindMegillPavicic2011}. From a different direction,
the projectivized $F_4$ and $H_4$ root configurations are also notable
examples of geproci sets, finite subsets of $\PP^3$ whose general
projection to $\PP^2$ is a complete intersection
\cite{ChiantiniEtAl2022}. These examples suggest that the projective
geometry of four-dimensional contextual configurations deserves to be
studied in its own right.

Our construction starts from a smooth split quadric
$Q\subset\PP^3(\mathbb R)$ and its dual $\dual{Q}$ with respect to the
Euclidean polarity. Since a split quadric carries two rulings by
projective lines, finite choices of lines in the two rulings determine
finite grids. We show that two suitably positioned grids on the pair
$Q,\dual{Q}$ give rise to orthogonal contexts through pairs of polar
secants. In this way the relevant orthogonality is organized by
projective incidence and duality rather than introduced only at the
level of the orthogonality hypergraph.

Our first main result is an infinite geometric family of Kochen--Specker
configurations. For every pair of odd integers $p,q\geq3$, we construct
two $(p,q)$-grids
\[
A\subset Q,
\quad
B\subset\dual{Q},
\quad
\text{whose union}
\quad
Z_{p,q}=A\sqcup B
\]
consists of $2pq$ rays and supports $pq$ distinguished orthogonal
contexts. Every ray occurs in exactly two of these contexts, and since
$pq$ is odd, the usual parity argument gives a Kochen--Specker
obstruction. The construction is naturally indexed by
$\mathbb Z_p\times\mathbb Z_q$, reflecting the two rulings of the
grids.

From the viewpoint of contextuality, this gives analytic families of
measurement scenarios whose incompatibility structure and parity
obstruction are controlled directly by projective data. In
particular, the distinguished contexts are not obtained by an
exhaustive search through orthogonality relations: they are selected
geometrically by polarity. The construction therefore retains an
explicit description of both the rays and the contextuality proof
throughout the family.

When $p$ and $q$ are relatively prime, the Chinese remainder theorem
identifies $\mathbb Z_p\times\mathbb Z_q$ with $\mathbb Z_{pq}$.
Under this identification, we recover, up to orthogonal equivalence
and reindexing, the four-dimensional Kochen--Specker construction of
Elford and Lison\v{e}k \cite{ElfordLisonek2024}. Thus their two cyclic
orbits acquire a projective interpretation as grids on dual quadrics,
and their distinguished cyclic steps become the two diagonal
steps of the grid. The dual-grid construction is therefore not
merely another parametrization of the cyclic family: it identifies a
projective mechanism underlying it, and continues to make sense when
the single cyclic parametrization is unavailable.

The smallest case of the two cyclic $(n,n)$-grids recovers the Cabello configuration. We
prove that
\[
Z_{3,3}\simeq_{\mathrm{orth}} \mathrm{Cabello}_{18\text{--}9}
\]
where $\simeq_{\mathrm{orth}}$ denotes orthogonal equivalence. Under this identification, the eighteen
Cabello rays split into two nine-point $(3,3)$-grids on
Euclidean-dual quadrics, and the nine contexts of the parity proof are
precisely those selected by the polar-secant construction. Thus the
orthogonality and parity structure of the Cabello configuration admit
a direct interpretation in terms of rulings, secants, and polarity.

The diagonal case $p=q=n$ carries an additional projective structure.
The Euclidean geometry of the supporting quadrics determines two
mutually polar spectral lines, and the incidence geometry of each
cyclic $(n,n)$-grid canonically selects $n$ points on the corresponding
line. Adjoining these points gives a collinear completion
\[
Z_{n,n}\subset\widehat Z_n,
\qquad
|\widehat Z_n|=2n(n+1),
\]
defined for every odd $n\geq3$. The completion is projective in
origin: the new points are forced by collinearity, not by
orthogonality. Nevertheless, after the completion has been performed,
the Euclidean polarity produces a second canonical family of
orthogonal contexts involving the added points. The original
Kochen--Specker obstruction is already present in $Z_{n,n}$; the role
of the completion is instead to reveal additional projective
incidence and, through polarity, additional orthogonality.

The first two diagonal completions recover exceptional root
configurations. For $n=3$, the three points added to each grid form a
trisecant completing it to a projective $D_4$ configuration, and the
two $D_4$ components together give the projectivized $F_4$ root
configuration, equivalently the Peres $24$-ray system. For a
distinguished admissible realization with $n=5$, the same completion
procedure gives the projectivized $H_4$ root configuration. Thus
\[
Z_{3,3}\simeq_{\mathrm{orth}} \mathrm{Cabello}_{18\text{--}9},
\qquad
\widehat Z_3\simeq_{\mathrm{orth}} F_4^{\mathrm{proj}}
                 \simeq_{\mathrm{orth}}\mathrm{Peres}_{24},
\qquad
\widehat Z_5\simeq_{\mathrm{orth}} H_4^{\mathrm{proj}}.
\]
The root systems are therefore not inputs to the construction: they
emerge as exceptional members of an infinite geometric family.

The paper is organized as follows.
Section~\ref{sec:polarity} recalls the projective geometry of Euclidean
polarity, dual quadrics, and orthogonal contexts.
Section~\ref{sec:grids} constructs the dual-grid configurations
$Z_{p,q}$ and proves the parity theorem.
Section~\ref{sec:EL} compares the construction with that of Elford and
Lison\v{e}k.
Section~\ref{sec:cabello} identifies $Z_{3,3}$ with the Cabello
$18$--$9$ configuration.
In Section~\ref{sec:completion} we develop the canonical collinear
completion of the diagonal family and the second family of orthogonal
contexts.
Section~\ref{sec:exceptional} treats the exceptional $F_4$ and $H_4$
realizations.
We conclude with further questions concerning the projective and
orthogonality structures of the completed configurations.
The supplementary material provides explicit coordinates and supporting computational details, with separate SageMath files supplied for reproducibility; the main mathematical arguments are contained in the text.

\section{Euclidean polarity and orthogonality}
\label{sec:polarity}

Let $V=\RR^4$ be endowed with a positive definite inner product $h$,
and let $\PP(V)\simeq\PP^3(\RR)$ be its projectivization. We regard
points of $\PP(V)$ as rays in $V$. Two rays $x=[v]$ and $y=[w]$ are
orthogonal, written $x\perp_h y$, if $h(v,w)=0$.

The inner product identifies $V$ with its dual and induces a polarity
on $\PP(V)$. The polar plane of a point $x=[v]$ is
\[
x^{\perp_h}=\PP(v^{\perp_h}),
\]
while the polar of a projective line $\ell=\PP(U)$ is the projective
line
\[
\ell^{\perp_h}=\PP(U^{\perp_h}).
\]
Here $\dim U=\dim U^{\perp_h}=2$.

\subsection{Dual quadrics}

Let $Q\subset\PP(V)$ be a smooth quadric. After choosing an
$h$-orthonormal basis, write
\[
Q=V(x^tMx),
\]
where $M$ is a real symmetric invertible $4\times4$ matrix, defined
up to a nonzero scalar. The projective dual
$Q^\vee\subset\PP(V^\vee)$ parametrizes the tangent planes to $Q$.
Using the identification $V\simeq V^\vee$ induced by $h$, we regard
$Q^\vee$ as a second quadric in $\PP(V)$ and denote it by $\dual Q$.
In these coordinates,
\[
\dual Q=V(y^tM^{-1}y).
\]

Indeed, the tangent plane to $Q$ at $x$ has equation $x^tMy=0$ and
corresponds under the Euclidean polarity to the ray $[Mx]$. Since
\[
(Mx)^tM^{-1}(Mx)=x^tMx,
\]
this ray belongs to $\dual Q$. Equivalently, a point
$b\in\dual Q$ represents through its polar plane $b^{\perp_h}$ a
tangent plane to $Q$. This is the link between projective duality and
the orthogonality relation used below.

When $Q$ is split, it carries two rulings by projective lines. These
rulings provide the incidence structure of the finite grids introduced
in Section~\ref{sec:grids}, while the Euclidean polarity between $Q$
and $\dual Q$ provides their orthogonality structure.

\subsection{Orthogonal contexts and polar secants}

An \emph{orthogonal context} is a set of four pairwise orthogonal rays
in $\PP(V)$, equivalently the projectivization of an orthogonal basis
of $V$. In the Kochen--Specker setting, these are the maximal
compatible sets of rank-one projectors
\cite{BudroniEtAl2022,KochenSpecker1967}.

The following elementary observation is the local geometric mechanism
behind the dual-grid construction.

\begin{proposition}[Polar-secant principle]
\label{prop:polar-secant}
Let $\ell_A=\langle a_1,a_2\rangle$ and
$\ell_B=\langle b_1,b_2\rangle$ be projective lines such that
\[
\ell_B=\ell_A^{\perp_h}.
\]
If $a_1\perp_h a_2$ and $b_1\perp_h b_2$, then
$\{a_1,a_2,b_1,b_2\}$ is an orthogonal context.
\end{proposition}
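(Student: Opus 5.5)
The plan is to lift everything to the vector space $V=\RR^4$ and check the six pairwise orthogonality relations among $a_1,a_2,b_1,b_2$ directly. Write $\ell_A=\PP(U)$ with $U\subset V$ two-dimensional, and choose representatives $a_i=[u_i]$, $b_j=[w_j]$. Since $\ell_A=\langle a_1,a_2\rangle$, the vectors $u_1,u_2$ span $U$. The hypothesis $\ell_B=\ell_A^{\perp_h}=\PP(U^{\perp_h})$ gives that $w_1,w_2$ span $U^{\perp_h}$.

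The six relations split into three groups. First, $h(u_1,u_2)=0$ is the hypothesis $a_1\perp_h a_2$. Second, $h(w_1,w_2)=0$ is the hypothesis $b_1\perp_h b_2$. Third, the four mixed relations $h(u_i,w_j)=0$ hold because $u_i\in U$ and $w_j\in U^{\perp_h}$. So the four rays are pairwise orthogonal.

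It then remains to check that these are four distinct rays forming the projectivization of an orthogonal basis, which is what the definition of an orthogonal context requires. The vectors $u_1,u_2,w_1,w_2$ are nonzero and pairwise orthogonal for the positive definite form $h$. Any dependence $\sum c_k v_k=0$ among them can be paired with each $v_k$ to give $c_k h(v_k,v_k)=0$. Positive definiteness then forces $c_k=0$, so the four vectors are linearly independent and hence an orthogonal basis of $V$. Equivalently, $U\cap U^{\perp_h}=0$ for definite $h$, so $V=U\oplus U^{\perp_h}$ with orthogonal bases on each summand.

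The only point needing care is nondegeneracy. The statement writes $\langle a_1,a_2\rangle$ for a line, so $a_1\neq a_2$ is implicit, and likewise $b_1\neq b_2$. In any case, orthogonality of two points under a definite form already forces them to be distinct. Beyond this there is no real obstacle: the argument is a direct unwinding of the definition of the polar line.
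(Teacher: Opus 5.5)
Your proposal is correct and follows the paper's own proof: the four mixed relations come from $\ell_B=\ell_A^{\perp_h}$, and the two remaining ones are the hypotheses $a_1\perp_h a_2$ and $b_1\perp_h b_2$. Your additional check that four pairwise orthogonal nonzero vectors form a basis under the definite form $h$ is sound; the paper leaves it implicit because it defines a context as four pairwise orthogonal rays, which it takes to be equivalent to an orthogonal basis.
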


\begin{proof}
Every ray on $\ell_B$ is orthogonal to every ray on $\ell_A$, so
$a_i\perp_h b_j$ for all $i,j$. Together with the orthogonality of
the two pairs of the defining points, this gives all six pairwise orthogonality
relations.
\end{proof}

Thus projective polarity supplies four of the six orthogonality
relations defining a context; only the orthogonality within the two
pairs of endpoints has to be imposed separately.

There is also a useful tangent-plane interpretation. If
$a_1,a_2\in Q$ and $b_1,b_2\in\dual Q$ satisfy
\[
\langle b_1,b_2\rangle
=
\langle a_1,a_2\rangle^{\perp_h},
\]
then $b_1^{\perp_h}$ and $b_2^{\perp_h}$ are tangent planes to $Q$,
and their intersection is the secant $\langle a_1,a_2\rangle$.
Thus a secant of $Q$ and its polar secant on $\dual Q$ are two
projective descriptions of the same orthogonality data.

\subsection{Parity configurations}

The polar-secant principle is local: a pair of polar secants produces
a single orthogonal context. Kochen--Specker contextuality is instead
a global property of a family of overlapping contexts. We use the
following standard parity criterion; see, e.g., \cite{WaegellAravind2011Peres}.

\begin{lemma}[Parity criterion]
\label{lem:parity}
Let $Z$ be a finite set of rays and let $\mathcal C$ be a finite
family of orthogonal contexts supported on $Z$. If $|\mathcal C|$ is
odd and every ray of $Z$ occurs in an even number of contexts, then
$Z$ is a Kochen--Specker configuration.
\end{lemma}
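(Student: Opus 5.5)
The plan is to argue by contradiction with a double-counting argument modulo $2$. Suppose $Z$ admits a noncontextual assignment, that is, a function $f\colon Z\to\{0,1\}$ such that every orthogonal context of rays in $Z$ contains exactly one ray with $f=1$. In particular, this holds for every context in the distinguished family $\mathcal C$. We will show that the two natural ways of counting the value-$1$ incidences between rays and contexts in $\mathcal C$ give different parities.

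First, count by contexts. The quantity
\[
S=\sum_{C\in\mathcal C}\ \sum_{z\in C} f(z)
\]
equals $\sum_{C\in\mathcal C}1=|\mathcal C|$, since each context contributes exactly one ray of value $1$. This number is odd by hypothesis.

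Second, count by rays. Interchanging the order of summation gives
\[
S=\sum_{z\in Z} f(z)\cdot\#\{C\in\mathcal C: z\in C\}.
\]
Every multiplicity $\#\{C\in\mathcal C: z\in C\}$ is even by hypothesis, so each summand is even, and hence $S$ is even. This contradicts the first count, so no such $f$ exists, and $Z$ is a Kochen--Specker configuration.

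The only point that needs care is definitional: the argument uses only the contexts in $\mathcal C$. Any assignment that is valid on all orthogonal bases inside $Z$ is in particular valid on the contexts of $\mathcal C$. So nonexistence on the subfamily $\mathcal C$ suffices, and there is no real obstacle. Note also that the contexts in $\mathcal C$ are counted as a family, possibly with repetitions; the parity argument is unaffected provided multiplicities are counted consistently in both sums.
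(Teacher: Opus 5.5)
Your proposal is correct and follows the paper's proof essentially verbatim: assume a valuation exists, count the value-$1$ incidences context by context to get the odd number $|\mathcal C|$, then ray by ray as $\sum_{z} m(z)f(z)$, which is even. Your remark that only validity on the subfamily $\mathcal C$ is used (so the conclusion holds whether a valuation is required on $\mathcal C$ or on all orthogonal bases in $Z$) is a harmless clarification the paper leaves implicit.
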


\begin{proof}
Suppose that a Kochen--Specker valuation $v\colon Z\to\{0,1\}$
existed. Counting the rays assigned the value $1$ context by context
gives $|\mathcal C|$, which is odd. Counting the same incidences ray
by ray gives
\[
\sum_{x\in Z}m(x)v(x),
\]
where $m(x)$ is the number of contexts containing $x$. This sum is
even because every $m(x)$ is even, a contradiction.
\end{proof}

Thus the construction has two distinct levels: Euclidean polarity
provides the local orthogonality relations needed to form contexts,
while the global incidence pattern of those contexts produces the
Kochen--Specker obstruction through parity. Section~\ref{sec:grids}
realizes these two mechanisms simultaneously.

\section{Dual-grid Kochen--Specker configurations}
\label{sec:grids}

We now construct the first family announced in the Introduction. For
every pair of odd integers $p,q\geq3$, we shall obtain two finite grids
on Euclidean-dual quadrics whose union consists of $2pq$ rays
supporting $pq$ distinguished orthogonal contexts, with every ray
occurring in exactly two contexts. The parity criterion of
Lemma~\ref{lem:parity} will then give a Kochen--Specker configuration.

The geometric mechanism is the one isolated in
Section~\ref{sec:polarity}. The relative position of the two grids is
chosen so that secants in two distinguished diagonal steps occur
in polar pairs. Proposition~\ref{prop:polar-secant} turns these pairs
into orthogonal contexts, while their global incidence pattern
produces the parity obstruction.

\subsection{Finite grids and a dual normal form}
\label{subsec:finite-grids}

Let $U$ and $W$ be two-dimensional real vector spaces. The
projectivized decomposable tensors form the Segre quadric
$\Sigma\subset\PP(U\otimes W)$, naturally identified with
$\PP(U)\times\PP(W)$. After choosing bases and identifying
$U\otimes W\simeq\operatorname{Mat}_2(\RR)$, a tensor $X$ is
decomposable if and only if the corresponding matrix has rank one.
Thus the quadric $\Sigma$ is given by $\det X=0$.

The two rulings have a simple tensor interpretation. Fixing
$[u]\in\PP(U)$ and varying $[w]\in\PP(W)$ gives a line of one ruling,
while fixing $[w]$ and varying $[u]$ gives a line of the other.
Accordingly, finite subsets
$S=\{[u_0],\ldots,[u_{p-1}]\}\subset\PP(U)$ and
$T=\{[w_0],\ldots,[w_{q-1}]\}\subset\PP(W)$ determine a
$(p,q)$-\emph{grid}
\[
\Gamma(S,T)=
\{[u_i\otimes w_j]:(i,j)\in\ZZ_p\times\ZZ_q\}\subset\Sigma.
\]
Its $pq$ points are the intersections of $p$ lines of one ruling and
$q$ lines of the other.

We use cyclic grids, obtained by taking a finite orbit of a projective
rotation in each factor and forming their product. Such a grid is
naturally indexed by $\ZZ_p\times\ZZ_q$, with the two factors
corresponding to the two rulings. No coprimality assumption is involved in this product
description. When $p$ and $q$ are coprime, the Chinese remainder
theorem replaces it by a single cyclic parameter; this will give the
connection with the construction of Elford and Lison\v{e}k in
Section~\ref{sec:EL}.

For $\theta\in\RR$, let
\[
R_\theta=
\begin{pmatrix}
	\cos\theta&-\sin\theta\\
	\sin\theta&\cos\theta
\end{pmatrix},
\qquad
u(\theta)=(\cos\theta,\sin\theta)^t.
\]
In matrix coordinates, rotations on the two factors act independently
on the Segre quadric by $X\mapsto R_\alpha X R_\beta^t$. We use an
orthogonal change of coordinates which separates this tensor-product
action into two planar rotations and makes its interaction with the
Euclidean metric explicit. The tensor-product description is used
only to realize the split quadric as a Segre surface and to describe
its two rulings; no bipartite interpretation is assumed.

Let $p,q\geq3$ be odd, choose
units $a\in\ZZ_p^\times$ and $b\in\ZZ_q^\times$, and set
\[
\alpha=\frac{2\pi a}{p},\qquad
\beta=\frac{2\pi b}{q},\qquad
\theta_{ij}^{\pm}=i\alpha\pm j\beta.
\]
Under the identification
$U\otimes W\simeq\operatorname{Mat}_2(\RR)$, followed by the
orthogonal change of coordinates
\[
X\longmapsto
(x_{11}+x_{22},\,x_{21}-x_{12},\,
 x_{11}-x_{22},\,x_{21}+x_{12}),
\]
up to a common factor, the decomposable tensor
$u(i\alpha)\otimes u(j\beta)$ is sent to
$(u(\theta_{ij}^{-}),u(\theta_{ij}^{+}))$. Thus the tensor-product
action becomes the direct sum of rotations through the angles
$\alpha-\beta$ and $\alpha+\beta$.

Fix $c\in\RR^\times$ and use the resulting orthogonal decomposition
$V\simeq\RR^2\oplus\RR^2$.  Define
\[
a_{ij}=
\bigl(u(\theta_{ij}^{-}),-c\,u(\theta_{ij}^{+})\bigr),
\qquad
b_{ij}=
\bigl(c\,u(\theta_{ij}^{-}),u(\theta_{ij}^{+})\bigr),
\]
and let $A_{ij}=[a_{ij}]$, $B_{ij}=[b_{ij}]$,
$A=\{A_{ij}\}$, and $B=\{B_{ij}\}$.

Write $(u_1,u_2)$ and $(v_1,v_2)$ for the coordinates on the two
summands of $V\simeq\RR^2\oplus\RR^2$.
\begin{proposition}
\label{prop:dual-grid-quadrics}
The sets $A$ and $B$ are cyclic $(p,q)$-grids. Moreover, $A$ lies
on the smooth quadric
\[
Q_c\colon
c^2(u_1^2+u_2^2)-(v_1^2+v_2^2)=0,
\]
while $B$ lies on its Euclidean dual $\dual{Q_c}$, represented by
$(u_1^2+u_2^2)-c^2(v_1^2+v_2^2)=0$.
\end{proposition}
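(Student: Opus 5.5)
We split the statement into three parts: the two quadric memberships, the duality between the two quadrics, and the grid structure. The first two are direct computations in the coordinates $(u_1,u_2,v_1,v_2)$ on $V\simeq\RR^2\oplus\RR^2$. The third is proved by exhibiting each of $A$ and $B$ as the image of a Segre grid under an invertible linear map.

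\emph{Quadric memberships.} Since $u(\theta)$ is a unit vector,
\[
a_{ij}=(u(\theta^-_{ij}),-c\,u(\theta^+_{ij}))
\]
has first block of squared norm $1$ and second block of squared norm $c^2$. Hence $c^2\cdot 1-c^2=0$, so $A_{ij}\in Q_c$. Likewise $b_{ij}$ has block norms $c^2$ and $1$, so it satisfies $(u_1^2+u_2^2)-c^2(v_1^2+v_2^2)=0$.

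\emph{Smoothness and duality.} Write $Q_c=V(x^tMx)$ with $M=\mathrm{diag}(c^2,c^2,-1,-1)$ in the $h$-orthonormal basis. Then $M$ is invertible because $c\neq0$, so $Q_c$ is smooth. Its signature is $(2,2)$, so $Q_c$ is split. By the description of dual quadrics in Section~\ref{sec:polarity}, $\dual{Q_c}=V(y^tM^{-1}y)$ with
\[
M^{-1}=\mathrm{diag}(c^{-2},c^{-2},-1,-1).
\]
Multiplying by $c^2$ gives the stated equation. As a consistency check, $Ma_{ij}=c^2 b_{ij}$, which agrees with the tangent-plane description $[Mx]\in\dual{Q_c}$.

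\emph{Grid structure.} By the earlier computation, the orthogonal change of coordinates $\phi$ sends $u(i\alpha)\otimes u(j\beta)$ to $(u(\theta^-_{ij}),u(\theta^+_{ij}))$, up to a common factor. Therefore
\[
a_{ij}=D_A\,\phi\bigl(u(i\alpha)\otimes u(j\beta)\bigr),\qquad D_A=\mathrm{diag}(1,1,-c,-c),
\]
and similarly $b_{ij}=D_B\,\phi\bigl(u(i\alpha)\otimes u(j\beta)\bigr)$ with $D_B=\mathrm{diag}(c,c,1,1)$. The map $g_A=D_A\phi$ is a linear isomorphism, so it carries the Segre quadric $\Sigma$ onto $Q_c$ and carries each of its rulings to a ruling of $Q_c$. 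It then carries $\Gamma(S,T)$, with $S=\{[u(i\alpha)]\}$ and $T=\{[u(j\beta)]\}$, onto $A$. The same argument with $g_B=D_B\phi$ gives $B$ on $\dual{Q_c}$.

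It remains to show that $S$ and $T$ are finite orbits of projective rotations of the right size. Since $a\in\ZZ_p^\times$, the classes $i\alpha \bmod \pi$ for $i\in\ZZ_p$ are distinct. Indeed, $i\alpha\equiv i'\alpha \pmod \pi$ forces $2a(i-i')/p\in\ZZ$, and since $p$ is odd this gives $i=i'$ in $\ZZ_p$. Thus $|S|=p$, and $S$ is the orbit of $[u(0)]$ under the rotation $R_\alpha$. In the same way, using that $q$ is odd and $b\in\ZZ_q^\times$, we get $|T|=q$. Hence $A$ and $B$ are cyclic $(p,q)$-grids.

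The only delicate points are two checks, neither of which is a real obstacle. First, the common scalar in $\phi$ must not depend on $(i,j)$; it is $1/\sqrt2$ or similar, which follows from the stated formula. Second, the distinctness of the $pq$ points uses the oddness of $p$ and $q$, because antipodal unit vectors define the same projective point.
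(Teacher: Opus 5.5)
Your proof is correct and follows essentially the paper's argument: $A$ and $B$ are the images of the Segre grid under $T_A(u,v)=(u,-cv)$ and $T_B(u,v)=(cu,v)$, and duality follows from $c^2M_c^{-1}=\operatorname{diag}(1,1,-c^2,-c^2)$. Your distinctness count is handled in the paper separately, in Lemma~\ref{lem:dual-grid-distinct}; and in your consistency check the correct identity is $M_c a_{ij}=c\,b_{ij}$ rather than $c^2 b_{ij}$, a slip that is harmless projectively.
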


\begin{proof}
After the orthogonal change of coordinates above, the Segre quadric is
$\Sigma_0\colon\|u\|^2-\|v\|^2=0$. The sets $A$ and $B$ are its
images under the projective transformations
$T_A(u,v)=(u,-cv)$ and $T_B(u,v)=(cu,v)$, respectively. Hence both
are cyclic $(p,q)$-grids.

Moreover, $T_A(\Sigma_0)=Q_c$, while $T_B(\Sigma_0)$ is represented
by $(u_1^2+u_2^2)-c^2(v_1^2+v_2^2)=0$. The matrix of $Q_c$ is
$M_c=\operatorname{diag}(c^2,c^2,-1,-1)$, and
$c^2M_c^{-1}=\operatorname{diag}(1,1,-c^2,-c^2)$. By
Section~\ref{sec:polarity}, the second quadric is therefore
$\dual{Q_c}$.
\end{proof}

The parameter $c$ leaves the projective incidence structure of the
two grids unchanged but controls their relative Euclidean geometry.
We choose it so that the two diagonal steps of the product
indexing give orthogonal pairs. The required condition is
\begin{equation}
	\label{eq:c-condition}
	c^2=-\frac{\cos(\alpha-\beta)}{\cos(\alpha+\beta)},
\end{equation}
whenever the right-hand side is positive and finite. Thus, once the
two cyclic steps are fixed, $c^2$ is determined by orthogonality.

\subsection{Polar secants and orthogonal contexts}
\label{subsec:dual-grid-contexts}

The two diagonal steps $(1,1)$ and $(1,-1)$ in
$\ZZ_p\times\ZZ_q$ produce the polar secants underlying the
construction.

\begin{proposition}
\label{prop:dual-grid-polar-secants}
Assume that \eqref{eq:c-condition} holds. For every
$(i,j)\in\ZZ_p\times\ZZ_q$, the secants
$\ell_{ij}=\langle A_{ij},A_{i-1,j-1}\rangle$ and
$m_{ij}=\langle B_{i,j-1},B_{i-1,j}\rangle$ are polar to one another,
and the two endpoints on each secant are orthogonal.
\end{proposition}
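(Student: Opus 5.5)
The plan is a direct computation in the coordinates $V\simeq\RR^2\oplus\RR^2$, using only the elementary identity $u(\phi)^t u(\psi)=\cos(\phi-\psi)$. For general indices $(i,j)$ and $(k,l)$, write $\Delta^\pm=\theta^\pm_{ij}-\theta^\pm_{kl}$. From the definitions of $a_{ij}$ and $b_{ij}$ I will record three inner-product formulas:
\[
h(a_{ij},a_{kl})=\cos\Delta^-+c^2\cos\Delta^+,\qquad
h(b_{ij},b_{kl})=c^2\cos\Delta^-+\cos\Delta^+,\qquad
h(a_{ij},b_{kl})=c\,(\cos\Delta^--\cos\Delta^+).
\]
Everything then reduces to evaluating $\Delta^\pm$ for the relevant index pairs.

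First, the orthogonality of endpoints. For $A_{ij}$ and $A_{i-1,j-1}$ we have $\Delta^-=\alpha-\beta$ and $\Delta^+=\alpha+\beta$. Hence $h=\cos(\alpha-\beta)+c^2\cos(\alpha+\beta)$, which vanishes exactly by \eqref{eq:c-condition}. For $B_{i,j-1}$ and $B_{i-1,j}$ the roles are swapped: one checks $\Delta^-=\alpha+\beta$ and $\Delta^+=\alpha-\beta$. So $h=c^2\cos(\alpha+\beta)+\cos(\alpha-\beta)=0$ again by the same condition. This swap is precisely why the two diagonal steps $(1,1)$ on $A$ and $(1,-1)$ on $B$ are paired.

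Next, polarity. Since each pair of endpoints consists of orthogonal nonzero vectors, each pair is linearly independent. Thus $\ell_{ij}$ and $m_{ij}$ are genuine lines, i.e.\ projectivizations of $2$-dimensional subspaces. It therefore suffices to show that the four cross products $h(a,b)$ vanish. In that case the $2$-dimensional span of the $b$'s lies in, and hence by dimension equals, the orthogonal complement of the span of the $a$'s, so $m_{ij}=\ell_{ij}^{\perp_h}$. By the formula above, $h(a_{ij},b_{kl})=0$ whenever $\cos\Delta^-=\cos\Delta^+$, in particular whenever $\Delta^-=\pm\Delta^+$. The four cases are:
\begin{itemize}
\item for $(a_{ij},b_{i,j-1})$, $\Delta^\mp=\mp\beta$;
\item for $(a_{ij},b_{i-1,j})$, $\Delta^-=\Delta^+=\alpha$;
\item for $(a_{i-1,j-1},b_{i,j-1})$, $\Delta^-=\Delta^+=-\alpha$;
\item for $(a_{i-1,j-1},b_{i-1,j})$, $\Delta^\mp=\pm\beta$.
\end{itemize}
All four therefore vanish, independently of $c$.

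There is no real obstacle; the only care needed is the bookkeeping of signs in $\theta^\pm$ for the shifted indices. It is also worth noting that the well-definedness of $c$ requires \eqref{eq:c-condition} to have a positive finite right-hand side, which is assumed in the statement.
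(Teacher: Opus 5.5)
Your proposal is correct and follows essentially the same route as the paper's proof. Like the paper, you show by direct computation that both in-secant scalar products equal $\cos(\alpha-\beta)+c^2\cos(\alpha+\beta)$, which vanishes by \eqref{eq:c-condition}, and that the four cross products vanish independently of $c$. Your remark that the orthogonal endpoints are linearly independent, so that $\ell_{ij}$ and $m_{ij}$ are genuine lines and a dimension count gives $m_{ij}=\ell_{ij}^{\perp_h}$, makes explicit a step the paper leaves implicit.
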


\begin{proof}
The scalar products within the two secants coincide:
\[
\langle a_{ij},a_{i-1,j-1}\rangle
=
\langle b_{i,j-1},b_{i-1,j}\rangle
=
\cos(\alpha-\beta)+c^2\cos(\alpha+\beta),
\]
and hence vanish by \eqref{eq:c-condition}. The four cross scalar
products vanish independently of this condition. For example,
$\langle a_{ij},b_{i,j-1}\rangle
=c\cos\beta-c\cos\beta=0$, and the other three follow by the
corresponding shifts. Hence every ray on $m_{ij}$ is orthogonal to
every ray on $\ell_{ij}$, so $m_{ij}=\ell_{ij}^{\perp_h}$.
\end{proof}

\begin{corollary}
\label{cor:dual-grid-contexts}
For every $(i,j)\in\ZZ_p\times\ZZ_q$, the four rays
\[
C_{ij}=
\{A_{ij},A_{i-1,j-1},B_{i,j-1},B_{i-1,j}\}
\]
form an orthogonal context.
\end{corollary}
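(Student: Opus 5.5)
The corollary follows by combining Proposition~\ref{prop:dual-grid-polar-secants} with the polar-secant principle, Proposition~\ref{prop:polar-secant}. I take $\ell_A=\ell_{ij}=\langle A_{ij},A_{i-1,j-1}\rangle$ and $\ell_B=m_{ij}=\langle B_{i,j-1},B_{i-1,j}\rangle$. Proposition~\ref{prop:dual-grid-polar-secants} gives the three hypotheses of the polar-secant principle directly:
\begin{itemize}
\item $m_{ij}=\ell_{ij}^{\perp_h}$;
\item $A_{ij}\perp_h A_{i-1,j-1}$;
\item $B_{i,j-1}\perp_h B_{i-1,j}$.
\end{itemize}
Proposition~\ref{prop:polar-secant} then says the four rays are pairwise orthogonal, so $C_{ij}$ is an orthogonal context.

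One point needs care. For $\ell_{ij}$ and $m_{ij}$ to be genuine projective lines spanned by the named endpoints, the two endpoints on each secant must be distinct rays. This follows from orthogonality itself: nonzero vectors that are $h$-orthogonal cannot be proportional, since $h$ is positive definite. So $a_{ij}$ and $a_{i-1,j-1}$ span a $2$-dimensional subspace, and likewise for the two $b$'s. The resulting four vectors are pairwise orthogonal and nonzero, so they are linearly independent and form an orthogonal basis of $V$. In particular the four rays are distinct and $C_{ij}$ really has four elements.

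The nonzero-ness of the vectors is immediate from the explicit formulas, since $u(\theta)$ is a unit vector and $c\neq 0$. The only implicit dependence is on condition \eqref{eq:c-condition}, which the corollary inherits from Proposition~\ref{prop:dual-grid-polar-secants}.

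There is no real obstacle here. All the work, namely the vanishing of the internal scalar products under \eqref{eq:c-condition} and of the four cross products, was already done in Proposition~\ref{prop:dual-grid-polar-secants}. The one step worth stating explicitly is the nondegeneracy remark above: the rays are distinct, so the secants are well-defined lines and the context has four elements.
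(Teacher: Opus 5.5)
Your proof is correct and follows the paper's argument exactly: the corollary is obtained by feeding Proposition~\ref{prop:dual-grid-polar-secants} into the polar-secant principle, Proposition~\ref{prop:polar-secant}. Your extra remark is also valid and fills a detail the paper leaves implicit: nonzero $h$-orthogonal vectors cannot be proportional, so the secants are genuine lines and $C_{ij}$ consists of four distinct rays.
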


\begin{proof}
This follows immediately from
Proposition~\ref{prop:dual-grid-polar-secants} and the polar-secant
principle, Proposition~\ref{prop:polar-secant}.
\end{proof}

We next record the elementary facts needed for the parity count.

\begin{lemma}
\label{lem:dual-grid-distinct}
If $p$ and $q$ are odd, the rays within each grid are pairwise
distinct. If moreover $c^2\neq1$, then $A\cap B=\varnothing$ and
hence $|A\sqcup B|=2pq$.
\end{lemma}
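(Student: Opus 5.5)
We work directly in the coordinates $V\simeq\RR^2\oplus\RR^2$ in which $a_{ij}=(u(\theta_{ij}^-),-c\,u(\theta_{ij}^+))$ and $b_{ij}=(c\,u(\theta_{ij}^-),u(\theta_{ij}^+))$. Since $c\neq0$, each summand component of these vectors is nonzero. Two such vectors define the same ray exactly when they are proportional. Because $u(\theta)$ is a unit vector, the scalar of proportionality is $\pm1$ for $a$ against $a$ and for $b$ against $b$, and it must be the same on both summands.

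For distinctness within $A$, suppose $A_{ij}=A_{kl}$. Then $u(\theta_{ij}^-)=\varepsilon u(\theta_{kl}^-)$ and $u(\theta_{ij}^+)=\varepsilon u(\theta_{kl}^+)$ with a common sign $\varepsilon$. Hence $\theta_{ij}^\pm-\theta_{kl}^\pm\in\pi\ZZ$, with the same parity class in both. Adding and subtracting the two conditions gives $2(i-k)\alpha\in 2\pi\ZZ$ and $2(j-l)\beta\in2\pi\ZZ$, that is, $(i-k)\alpha\in\pi\ZZ$ and $(j-l)\beta\in\pi\ZZ$. Now $(i-k)\alpha=2\pi a(i-k)/p$ lies in $\pi\ZZ$ iff $p\mid 2a(i-k)$. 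Since $p$ is odd and $a$ is a unit, this forces $i\equiv k \pmod p$; similarly $j\equiv l\pmod q$. The same argument applies verbatim to $B$, since $T_B$ is invertible. An alternative view is that $A=T_A(\Gamma(S,T))$ for the Segre grid, and the points $[u(i\alpha)]\in\PP^1$ are distinct because $i\alpha\notin\pi\ZZ$ for $i\not\equiv0$ when $p$ is odd. Oddness is exactly what prevents $u(\theta)$ and $u(\theta+\pi)$ from both appearing as orbit points.

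For disjointness, suppose $A_{ij}=B_{kl}$. Then $(u(\theta_{ij}^-),-c\,u(\theta_{ij}^+))=\lambda(c\,u(\theta_{kl}^-),u(\theta_{kl}^+))$ for some $\lambda\neq0$. Taking norms of the two components gives $1=|\lambda||c|$ and $|c|=|\lambda|$, so $c^2=1$. Thus $c^2\neq1$ forces $A\cap B=\varnothing$. Geometrically, $A\subset Q_c$ and $B\subset\dual{Q_c}$, and the two quadrics meet only where $c^2\|u\|^2=\|v\|^2$ and $\|u\|^2=c^2\|v\|^2$. That forces $(c^4-1)\|v\|^2=0$, so for $c^2\neq1$ the real intersection is empty. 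Either argument suffices, and I would present the quadric version as a one-liner.

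The only step needing care is the sign bookkeeping in the first part. We must ensure that the same $\varepsilon$ applies on both summands, since a ray is determined up to a global scalar; the norm argument fixes $|\lambda|=1$ only for vectors of equal shape. Once that is in place, the count $|A\sqcup B|=2pq$ follows immediately.
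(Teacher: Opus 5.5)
Your proposal is correct and follows essentially the paper's argument: equal norms force $a_{ij}=\pm a_{rs}$ with a common sign, comparing the two planar components gives $(i-r)\alpha,(j-s)\beta\in\pi\ZZ$, and oddness of $p,q$ together with $a,b$ being units concludes, while disjointness comes from $Q_c(\RR)\cap\dual{Q_c}(\RR)=\varnothing$ for $c^2\neq1$. Your explicit parity bookkeeping for the common sign spells out a step the paper leaves implicit. Your direct norm comparison ($1=|\lambda||c|$, $|c|=|\lambda|$) is an equally valid one-line alternative route to disjointness.
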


\begin{proof}
Suppose that $A_{ij}=A_{rs}$. Since all representatives $a_{ij}$ have
the same norm, $a_{ij}=\pm a_{rs}$. Comparing the two planar
components shows that
$(i-r)\alpha-(j-s)\beta$ and $(i-r)\alpha+(j-s)\beta$ are both
multiples of $\pi$. Hence $(i-r)\alpha$ and $(j-s)\beta$ are
multiples of $\pi$. Since $p$ and $q$ are odd and $a,b$ are units
modulo $p,q$, respectively, this forces $i=r$ in $\ZZ_p$ and $j=s$
in $\ZZ_q$. The argument for $B$ is identical.

If a real point belonged to both $Q_c$ and $\dual{Q_c}$, their
equations would give
$(c^2-1)(u_1^2+u_2^2)=(c^2-1)(v_1^2+v_2^2)=0$. For $c^2\neq1$ this
forces both planar components to vanish. Thus
$Q_c(\RR)\cap\dual{Q_c}(\RR)=\varnothing$, and in particular
$A\cap B=\varnothing$.
\end{proof}

\subsection{The dual-grid family}
\label{subsec:dual-grid-family}

It remains to show that \eqref{eq:c-condition} can be satisfied for
every pair of odd integers $p,q\geq3$. For an odd integer $x\geq3$,
set
\[
k_x=
\begin{cases}
(x-1)/4,&x\equiv1\pmod4,\\
(x+1)/4,&x\equiv3\pmod4.
\end{cases}
\]
We call a pair of cyclic steps $(a,b)$ \emph{admissible} if $a$ and
$b$ are units modulo $p$ and $q$, respectively, the right-hand side
of \eqref{eq:c-condition} is positive and finite, and the resulting
value of $c$ satisfies $c^2\neq1$.

For an admissible pair $(a,b)$, let $c>0$ be determined by
\eqref{eq:c-condition}, and denote the resulting dual-grid
configuration by
\[
Z_{p,q}(a,b)=A\sqcup B.
\]
When the choice of cyclic steps is fixed or irrelevant, we abbreviate
this notation to $Z_{p,q}$.
\begin{lemma}
\label{lem:admissible-parameters}
Let $p,q\geq3$ be odd. The choice $a=k_p$ and $b=k_q$ is admissible.
\end{lemma}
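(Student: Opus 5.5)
The plan is to write the two angles as small perturbations of $\pi/2$ and read off the sign of each cosine in \eqref{eq:c-condition} directly. There are three things to check: that $k_p$ and $k_q$ are units, that the right-hand side of \eqref{eq:c-condition} is positive and finite, and that $c^2\neq1$.

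\emph{Units.} If $p\equiv1\pmod 4$ then $4k_p=p-1$, and if $p\equiv3\pmod4$ then $4k_p=p+1$. In either case any common divisor of $k_p$ and $p$ divides both $p\mp1$ and $p$, hence divides $1$. So $k_p\in\ZZ_p^\times$, and in the same way $k_q\in\ZZ_q^\times$. Since $p\geq3$, we also have $k_p\geq1$.

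\emph{Angles.} From $\alpha=2\pi k_p/p$ we get
\[
\alpha=\frac{\pi}{2}+\varepsilon,\qquad \varepsilon=\mp\frac{\pi}{2p},
\]
with the sign $-$ when $p\equiv1$ and $+$ when $p\equiv3\pmod 4$. Likewise $\beta=\tfrac{\pi}{2}+\delta$ with $\delta=\mp\tfrac{\pi}{2q}$. Because $p,q\geq3$, both $\varepsilon$ and $\delta$ are nonzero and satisfy $|\varepsilon|,|\delta|\leq\pi/6$.

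\emph{Positivity and finiteness.} Substituting these expressions gives
\[
\cos(\alpha-\beta)=\cos(\varepsilon-\delta),\qquad
\cos(\alpha+\beta)=\cos(\pi+\varepsilon+\delta)=-\cos(\varepsilon+\delta).
\]
Both $|\varepsilon-\delta|$ and $|\varepsilon+\delta|$ are at most $\pi/3$, so both cosines on the right are at least $1/2$. Therefore
\[
-\frac{\cos(\alpha-\beta)}{\cos(\alpha+\beta)}=\frac{\cos(\varepsilon-\delta)}{\cos(\varepsilon+\delta)},
\]
and this quotient is finite and positive.

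\emph{The condition $c^2\neq1$.} Having $c^2=1$ would force $\cos(\varepsilon-\delta)=\cos(\varepsilon+\delta)$. This is equivalent to $2\sin\varepsilon\sin\delta=0$. But $0<|\varepsilon|,|\delta|\leq\pi/6$, so both sines are nonzero, and hence $c^2\neq1$.

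The argument has no real obstacle. The only point needing care is to track the sign conventions in the two residue classes modulo $4$ correctly. That is exactly what the uniform bound $|\varepsilon|,|\delta|\le\pi/6$ handles, so the four sign combinations never need to be treated separately.
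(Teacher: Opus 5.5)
Your proof is correct and follows the paper's argument step for step. You check that $k_x$ is a unit, write the angles as $\pi/2\mp\pi/(2x)$ to get $\cos(\alpha-\beta)>0>\cos(\alpha+\beta)$, and rule out $c^2=1$ by a product-to-sum identity; your condition $2\sin\varepsilon\sin\delta=0$ is the paper's $2\cos\alpha\cos\beta=0$ rewritten, since $\cos\alpha=-\sin\varepsilon$, and your bound $\cos(\varepsilon\pm\delta)\geq 1/2$ simply makes the paper's sign claims explicit.
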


\begin{proof}
Write $x=4m+1$ or $x=4m+3$. In the first case $k_x=m$ and
$\gcd(m,4m+1)=1$, while in the second $k_x=m+1$ and
$\gcd(m+1,4m+3)=1$. Thus $k_x$ is a unit modulo $x$.

Moreover,
$2\pi k_x/x=\pi/2\mp\pi/(2x)$, with the minus sign when
$x\equiv1\pmod4$ and the plus sign when $x\equiv3\pmod4$. Hence
$\alpha$ and $\beta$ both lie within $\pi/6$ of $\pi/2$, so
$\cos(\alpha-\beta)>0$ and $\cos(\alpha+\beta)<0$. Thus
\eqref{eq:c-condition} determines a real nonzero value of $c$.

Finally, if $c^2=1$, then
$\cos(\alpha-\beta)+\cos(\alpha+\beta)=0$, hence
$2\cos\alpha\cos\beta=0$, which is impossible for these angles.
\end{proof}

The remaining point is global: the contexts
$C_{ij}$ must have the incidence pattern required by the parity
criterion.

\begin{lemma}
\label{lem:dual-grid-incidence}
The contexts $C_{ij}$ are pairwise distinct, and every ray of
$A\sqcup B$ belongs to exactly two of them. More precisely,
$A_{ij}\in C_{ij}\cap C_{i+1,j+1}$ and
$B_{ij}\in C_{i,j+1}\cap C_{i+1,j}$, and these are the only contexts
containing the respective rays.
\end{lemma}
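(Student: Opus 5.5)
The argument is purely combinatorial: it uses only the index pattern of the contexts $C_{ij}$ from Corollary~\ref{cor:dual-grid-contexts} and the fact, from Lemma~\ref{lem:dual-grid-distinct}, that the $2pq$ rays $A_{ij}$, $B_{ij}$ are pairwise distinct (for an admissible pair, $c^2\neq1$). Because the rays are distinct, membership of a ray in a context can be read off from its label: $A_{rs}\in C_{ij}$ holds exactly when $(r,s)$ equals $(i,j)$ or $(i-1,j-1)$ in $\ZZ_p\times\ZZ_q$. No Euclidean information is used beyond distinctness.

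\emph{Membership.} Each context $C_{ij}$ contains two $A$-rays, namely $A_{ij}$ and $A_{i-1,j-1}$, and two $B$-rays, namely $B_{i,j-1}$ and $B_{i-1,j}$. Fix a ray $A_{rs}$. It lies in $C_{ij}$ if and only if $(i,j)=(r,s)$ or $(i,j)=(r+1,s+1)$. These two index pairs differ, since $(1,1)\neq(0,0)$ in $\ZZ_p\times\ZZ_q$. Hence $A_{rs}$ lies in exactly the two contexts $C_{rs}$ and $C_{r+1,s+1}$.

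Similarly, fix a ray $B_{rs}$. It lies in $C_{ij}$ if and only if $(i,j-1)=(r,s)$ or $(i-1,j)=(r,s)$. This gives $(i,j)=(r,s+1)$ or $(i,j)=(r+1,s)$. These are distinct because $(0,1)\neq(1,0)$, as $p,q\geq3$. So $B_{rs}$ lies in exactly $C_{r,s+1}$ and $C_{r+1,s}$.

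\emph{Distinctness of contexts.} Suppose $C_{ij}=C_{kl}$ as sets of rays. Then $A_{ij}\in C_{kl}$, so by the membership count $(i,j)\in\{(k,l),(k-1,l-1)\}$. By symmetry, $(k,l)\in\{(i,j),(i-1,j-1)\}$. If $(i,j)\neq(k,l)$, both relations must hold in their second form, which forces $(2,2)=(0,0)$ in $\ZZ_p\times\ZZ_q$. This is impossible because $p$ and $q$ are odd. Hence $(i,j)=(k,l)$, and the $pq$ contexts are pairwise distinct.

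The only delicate point is to check that the two index pairs attached to each ray never coincide modulo $p$ and $q$. This holds since $p,q\geq3$; oddness is needed only in the distinctness step, to exclude $2\equiv0$.
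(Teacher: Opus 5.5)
Your argument is correct and is essentially the paper's proof: the incidences are read off directly from the index pattern of the contexts $C_{ij}$, and a coincidence $C_{ij}=C_{kl}$ with $(i,j)\neq(k,l)$ is ruled out because it would force $2\equiv0$ modulo the odd integers $p$ and $q$. You are more explicit than the paper in citing Lemma~\ref{lem:dual-grid-distinct}, including $A\cap B=\varnothing$, to justify that no other context contains a given ray; note that oddness of $p,q$ also enters through that lemma, not only in the final step.
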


\begin{proof}
The incidence formulas follow directly from the definition of
$C_{ij}$, and the two contexts containing a given ray are distinct
because $p,q\geq3$.

Suppose that $C_{ij}=C_{rs}$. Comparing their intersections with $A$
gives
$\{A_{ij},A_{i-1,j-1}\}
=\{A_{rs},A_{r-1,s-1}\}$.
By Lemma~\ref{lem:dual-grid-distinct}, either $(i,j)=(r,s)$ or the
two entries are exchanged. The latter would imply $2=0$ in both
$\ZZ_p$ and $\ZZ_q$, impossible since $p$ and $q$ are odd. Hence
$(i,j)=(r,s)$.
\end{proof}

We can now state the first main result.

\begin{theorem}[Dual-grid Kochen--Specker configurations]
	\label{thm:dual-grid-KS}
	For every pair of odd integers $p,q\geq3$, there exists an admissible
	pair $(a,b)$ such that the dual-grid configuration $Z_{p,q}(a,b)$
	consists of $2pq$ rays and supports $pq$ orthogonal contexts. Every
	ray occurs in exactly two of these contexts. In particular,
	$Z_{p,q}(a,b)$ is a Kochen--Specker configuration admitting a parity
	proof.
\end{theorem}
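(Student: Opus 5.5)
The proof assembles the lemmas established above. We first fix the cyclic steps: by Lemma~\ref{lem:admissible-parameters}, the choice $a=k_p$, $b=k_q$ is admissible. Concretely, $a$ and $b$ are units, the right-hand side of \eqref{eq:c-condition} is positive and finite, and the resulting $c>0$ satisfies $c^2\neq1$. With this $c$, Proposition~\ref{prop:dual-grid-quadrics} places the two cyclic $(p,q)$-grids $A\subset Q_c$ and $B\subset\dual{Q_c}$. Since \eqref{eq:c-condition} holds by construction, Proposition~\ref{prop:dual-grid-polar-secants} and Corollary~\ref{cor:dual-grid-contexts} show that each of the $pq$ sets $C_{ij}$, $(i,j)\in\ZZ_p\times\ZZ_q$, is an orthogonal context.

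Next we do the counting. Since $p,q$ are odd and $c^2\neq1$, Lemma~\ref{lem:dual-grid-distinct} gives that $A$ and $B$ each consist of $pq$ distinct rays and are disjoint, so $|Z_{p,q}(a,b)|=2pq$. Lemma~\ref{lem:dual-grid-incidence} gives two further facts. First, the contexts $C_{ij}$ are pairwise distinct, so the family $\mathcal C=\{C_{ij}\}$ has exactly $pq$ members. Second, each ray lies in exactly two of them: $A_{ij}$ lies only in $C_{ij}$ and $C_{i+1,j+1}$, and $B_{ij}$ lies only in $C_{i,j+1}$ and $C_{i+1,j}$. One also checks that each $C_{ij}$ really has four distinct members. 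Its $A$-part consists of two distinct rays because $(1,1)\neq(0,0)$ in $\ZZ_p\times\ZZ_q$, and similarly for its $B$-part. This is in any case implied by orthogonality, since orthogonal rays are distinct.

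Finally, $|\mathcal C|=pq$ is odd, being a product of odd integers, and every ray occurs in exactly two contexts, an even number. The parity criterion, Lemma~\ref{lem:parity}, then shows that no $0$--$1$ valuation exists. So $Z_{p,q}(a,b)$ is a Kochen--Specker configuration with a parity proof.

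There is no real obstacle, since all the substantive work is already in the preceding lemmas. The only point needing care is the bookkeeping: the admissibility conditions must be exactly those used by the earlier results. That is, $c^2>0$ finite feeds Proposition~\ref{prop:dual-grid-polar-secants}, and $c^2\neq1$ together with the oddness of $p,q$ feeds Lemma~\ref{lem:dual-grid-distinct}.
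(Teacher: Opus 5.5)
Your proposal is correct and follows the paper's proof essentially verbatim: you fix $a=k_p$, $b=k_q$ via Lemma~\ref{lem:admissible-parameters}, obtain the $pq$ contexts $C_{ij}$ from Proposition~\ref{prop:dual-grid-polar-secants} and Corollary~\ref{cor:dual-grid-contexts}, and count rays and incidences with Lemmas~\ref{lem:dual-grid-distinct} and~\ref{lem:dual-grid-incidence}. You then conclude with the parity criterion, Lemma~\ref{lem:parity}, and your additional remarks (each $C_{ij}$ has four distinct members, and admissibility supplies exactly the hypotheses the earlier results need) are accurate bookkeeping that the paper leaves implicit.
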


\begin{proof}
	Choose $a=k_p$ and $b=k_q$ as in
	Lemma~\ref{lem:admissible-parameters}, and let $c$ be determined by
	\eqref{eq:c-condition}. Proposition~\ref{prop:dual-grid-quadrics}
	gives the two grids on Euclidean-dual quadrics, and
	Corollary~\ref{cor:dual-grid-contexts} gives the $pq$ orthogonal
	contexts $C_{ij}$. By Lemma~\ref{lem:dual-grid-distinct},
	$Z_{p,q}(a,b)$ contains $2pq$ distinct rays, while
	Lemma~\ref{lem:dual-grid-incidence} shows that every ray occurs in
	exactly two contexts. Since $pq$ is odd, Lemma~\ref{lem:parity}
	gives the required Kochen--Specker obstruction.
\end{proof}

\begin{remark}
\label{rem:no-coprimality}
No coprimality assumption on $p$ and $q$ is required. The natural
indexing set is $\ZZ_p\times\ZZ_q$, reflecting the two rulings of the
grids. When $\gcd(p,q)=1$, the Chinese remainder theorem identifies
this product with $\ZZ_{pq}$. Section~\ref{sec:EL} shows that in this
case the construction recovers that of Elford and Lison\v{e}k.
\end{remark}

\begin{remark}
\label{rem:base-quartic}
For $c^2\neq1$, the real quadrics $Q_c$ and $\dual{Q_c}$ are
disjoint, whereas their complexifications meet in a reducible
quartic. Indeed, their equations imply
$u_1^2+u_2^2=v_1^2+v_2^2=0$, so over $\CC$ their intersection is the
union of the four lines obtained by independently choosing the signs
in $u_1\pm\sqrt{-1}\,u_2=0$ and
$v_1\pm\sqrt{-1}\,v_2=0$.
\end{remark}

\begin{remark}[Dependence on the cyclic steps]
	\label{rem:metric-parameters}
	For fixed $p$, $q$, and $c$, changing the units $a$ and $b$ only
	reindexes the points of the two grids. In the orthogonal construction,
	however, $c$ is itself determined by $(a,b)$ through
	\eqref{eq:c-condition}. Thus different admissible pairs may determine
	different Euclidean realizations $Z_{p,q}(a,b)$ of the same abstract
	dual-grid incidence structure. These realizations need not be
	orthogonally equivalent, and their full orthogonality hypergraphs may
	differ.
\end{remark}

The diagonal case $p=q$ carries additional geometry which is not
needed for the parity construction of
Theorem~\ref{thm:dual-grid-KS}. We return to it in
Section~\ref{sec:completion}, after relating the dual-grid family to
known four-dimensional Kochen--Specker configurations.

\section{The Elford--Lison\v{e}k construction}
\label{sec:EL}

We now compare the dual-grid family with the four-dimensional
Kochen--Specker construction of Elford and Lison\v{e}k
\cite{ElfordLisonek2024}. Their construction is described by two
cyclic orbits of length $pq$, under the assumption that $p$ and $q$
are coprime, whereas the dual-grid construction is naturally indexed
by $\ZZ_p\times\ZZ_q$, reflecting the two rulings of the supporting
quadrics. We show that in the coprime case the two realizations are
orthogonally equivalent. Under this equivalence, the two distinguished
cyclic steps of Elford and Lison\v{e}k become the two diagonal
steps of the grids.

Assume throughout this section that $p,q\geq3$ are odd and coprime,
and put $d=pq$. The Chinese remainder theorem gives an isomorphism
\[
\ZZ_d\simeq\ZZ_p\times\ZZ_q,
\qquad
t\longmapsto(t\bmod p,t\bmod q).
\]
Thus the single cyclic parameter may equivalently be regarded as the pair $(i,j)$ indexing the two rulings of the grid. 

The two diagonal steps of the grid become precisely the two distinguished cyclic steps. In particular, the apparently
one-dimensional cyclic structure retains the product geometry of the two rulings: the residue $r$ simply encodes the second diagonal
step in a single cyclic coordinate.

We now compare the Euclidean realizations. Choose units
$a\in\ZZ_p^\times$ and $b\in\ZZ_q^\times$, and put
$\alpha=2\pi a/p$ and $\beta=2\pi b/q$. In the notation of
Elford and Lison\v{e}k, the cyclic action is generated by
$M=R_\alpha\otimes R_\beta$, and the two generating vectors may be
taken as
\[
a_{\mathrm{EL}}=
\begin{pmatrix}
(1-c)\cos\beta\\
(1-c)\sin\beta\\
-(1+c)\sin\beta\\
(1+c)\cos\beta
\end{pmatrix},
\qquad
b_{\mathrm{EL}}=
\begin{pmatrix}
c+1\\
0\\
0\\
c-1
\end{pmatrix}.
\]
Their parameter $c$ satisfies the same relation as in
\eqref{eq:c-condition}, 
$c^2=-\frac{\cos(\alpha-\beta)}{\cos(\alpha+\beta)}.$

Let
\[
\Psi(x_{11},x_{12},x_{21},x_{22})
=
(x_{11}+x_{22},\,x_{21}-x_{12},\,
x_{11}-x_{22},\,x_{21}+x_{12})
\]
denote the change of coordinates used in
Section~\ref{sec:grids}. Since $\Psi/\sqrt2$ is orthogonal and
\[
\Psi M\Psi^{-1}
=
R_{\alpha-\beta}\oplus R_{\alpha+\beta},
\]
it intertwines the tensor-product action defining the
Elford--Lison\v{e}k orbits with the product rotation defining our
grids. A direct computation gives
\[
\begin{aligned}
	\Psi(a_{\mathrm{EL}})
	&=
	2(\cos\beta,-\sin\beta,-c\cos\beta,-c\sin\beta)
	=2a_{0,1},\\
	\Psi(b_{\mathrm{EL}})
	&=
	(2c,0,2,0)
	=2b_{0,0}.
\end{aligned}
\] Hence, for every $t\in\ZZ_d$,
\[
\Psi(M^t a_{\mathrm{EL}})=2a_{t,t+1},
\qquad
\Psi(M^t b_{\mathrm{EL}})=2b_{t,t},
\]
where the first and second indices are read modulo $p$ and $q$,
respectively. Since $p$ and $q$ are coprime, the Chinese remainder
theorem shows that the two cyclic orbits are exactly the two grids
$A$ and $B$, up to the indicated reindexing.

We can now identify the distinguished contexts as well.

\begin{proposition}
\label{prop:EL-equivalence}
Let $p,q\geq3$ be odd and relatively prime. Then the
Elford--Lison\v{e}k configuration associated with $p$ and $q$
\cite{ElfordLisonek2024} is orthogonally equivalent, up to
reindexing, to the dual-grid configuration $Z_{p,q}$ of
Theorem~\ref{thm:dual-grid-KS}. Under this equivalence, the two cyclic
orbits become the two $(p,q)$-grids
$A\subset Q_c$ and $B\subset\dual{Q_c}$, and the distinguished
orthogonal bases become the contexts $C_{ij}$ of
Corollary~\ref{cor:dual-grid-contexts}.
\end{proposition}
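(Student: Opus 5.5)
The map $\Psi/\sqrt2$ is orthogonal, and the computation preceding the statement already shows
\[
\Psi(M^t a_{\mathrm{EL}})=2a_{t,t+1},\qquad \Psi(M^t b_{\mathrm{EL}})=2b_{t,t}.
\]
So the plan is to use $\Psi/\sqrt2$ as the orthogonal equivalence and check three things in order: rays correspond to rays, contexts correspond to contexts, and the parameter $c$ is the same on both sides.

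\textbf{Rays.} I first verify the two displayed identities, starting from the base cases $t=0$ (a direct substitution). The induction step uses $\Psi M\Psi^{-1}=R_{\alpha-\beta}\oplus R_{\alpha+\beta}$. By construction, this rotation sends $a_{ij}\mapsto a_{i+1,j+1}$ and $b_{ij}\mapsto b_{i+1,j+1}$, because it adds $\alpha$ to $i\alpha$ and $\beta$ to $j\beta$ in both $\theta^\pm_{ij}$. Next I invoke the Chinese remainder theorem: the map $t\mapsto(t\bmod p,\,t\bmod q)$ is a bijection $\ZZ_d\to\ZZ_p\times\ZZ_q$. Hence $t\mapsto(t,t+1)$ and $t\mapsto(t,t)$ are also bijections onto $\ZZ_p\times\ZZ_q$. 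Therefore $\Psi$ maps the two EL orbits bijectively onto $A$ and $B$. Lemma~\ref{lem:dual-grid-distinct} guarantees there are no coincidences, so both configurations have $2pq$ rays. The supporting quadrics are $Q_c$ and $\dual{Q_c}$ by Proposition~\ref{prop:dual-grid-quadrics}.

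\textbf{Contexts.} The Elford--Lison\v{e}k distinguished bases are the $M$-orbit of a single base context. It consists of $a_{\mathrm{EL}}$, $b_{\mathrm{EL}}$, and their images under two distinguished cyclic powers, say $M^{s}a_{\mathrm{EL}}$ and $M^{r}b_{\mathrm{EL}}$, or a similar pattern from their paper. I transport this base context via $\Psi$ and match it to a single $C_{ij}$, which has the form $\{A_{ij},A_{i-1,j-1},B_{i,j-1},B_{i-1,j}\}$. In CRT coordinates:
\begin{itemize}[label={}]
\item the pair of $A$-points differing by the step $(1,1)$ corresponds to the cyclic step $t\mapsto t-1$ on the $A$-orbit, since $a_{t,t+1}\mapsto a_{t-1,t}$;
\item the pair $B_{i,j-1},B_{i-1,j}$ differing by $(1,-1)$ corresponds to the residue $r\in\ZZ_d$ with $r\equiv 1\pmod p$ and $r\equiv -1\pmod q$.
\end{itemize}
The relative offset between the $A$-part and the $B$-part then fixes which $C_{ij}$ the base context is. Since both families are orbits under the same cyclic group, which acts on our side as $(i,j)\mapsto(i+1,j+1)$, matching one context matches all $pq$ of them. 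Lemma~\ref{lem:dual-grid-incidence} confirms the $C_{ij}$ are distinct, so the correspondence is bijective.

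\textbf{Parameter $c$.} Both constructions impose $c^2=-\cos(\alpha-\beta)/\cos(\alpha+\beta)$. If the EL paper uses the opposite sign of $c$, I compose with the reflection $v\mapsto -v$ on the second summand, which is orthogonal and preserves the pattern.

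The main obstacle is the context step: faithfully translating Elford--Lison\v{e}k's description of their base orthogonal basis, including exact exponents and sign conventions, into grid indices. I would first compute $\Psi$ of each vector in their base basis explicitly, read off the indices $(i,j)$, and check that the result has the shape of some $C_{ij}$. If the offsets disagree, I would shift the indexing by a translation of $\ZZ_p\times\ZZ_q$. If needed, I would also apply the index involution $j\mapsto -j$ (equivalently $b\mapsto -b$), which preserves \eqref{eq:c-condition} up to the symmetry $\alpha\pm\beta\leftrightarrow\alpha\mp\beta$ combined with swapping the roles of $A$ and $B$.
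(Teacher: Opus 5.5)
Your proposal follows the paper's proof: the same orthogonal map $\Psi/\sqrt2$, the same CRT reindexing of the two orbits onto $A$ and $B$, and the same matching of the two distinguished cyclic steps with the diagonal steps $(1,1)$ and $(1,-1)$ (namely $t\mapsto t-1$, and the residue $r$ with $r\equiv1\pmod p$, $r\equiv-1\pmod q$). The base basis you leave open is, in the paper, $\{M^ta_{\mathrm{EL}},M^{t-1}a_{\mathrm{EL}},M^tb_{\mathrm{EL}},M^{t-r}b_{\mathrm{EL}}\}$, whose $\Psi$-image is $\{A_{i,j+1},A_{i-1,j},B_{i,j},B_{i-1,j+1}\}=C_{i,j+1}$ with $i=t\bmod p$, $j=t\bmod q$, so none of your fallback translations or index involutions is needed; also, the offset between the $A$- and $B$-parts is not a free parameter selecting different $C_{ij}$, because the two $B$-rays of such a basis must be the endpoints of the polar secant of its $A$-pair.
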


\begin{proof}
The preceding computation shows that the orthogonal transformation
$\Psi/\sqrt2$ maps the two Elford--Lison\v{e}k orbits onto $A$ and
$B$, up to reindexing.

It remains to compare the contexts. Let $r\in\ZZ_d$ be determined by
$r\equiv1\pmod p$ and $r\equiv-1\pmod q$. The distinguished
Elford--Lison\v{e}k basis indexed by $t$ is 
$$\{M^t a_{\mathrm{EL}},M^{t-1}a_{\mathrm{EL}},
  M^t b_{\mathrm{EL}},M^{t-r}b_{\mathrm{EL}}\}.$$ Writing $i=t\bmod p$ and $j=t\bmod q$, its image under $\Psi$ is 
$\{A_{i,j+1},A_{i-1,j},
  B_{i,j},B_{i-1,j+1}\}
=
C_{i,j+1}.$ Thus the same orthogonal transformation identifies both the rays and
the distinguished orthogonal contexts.
\end{proof}

\begin{remark}
\label{rem:EL-coprimality}
The coprimality assumption is needed only to identify
$\ZZ_p\times\ZZ_q$ with the single cyclic parameter space
$\ZZ_{pq}$. It is not intrinsic to the grid construction, whose two
factors reflect the two rulings of the supporting quadrics. Thus
Theorem~\ref{thm:dual-grid-KS} extends the Elford--Lison\v{e}k family
from relatively prime odd $p,q$ to arbitrary odd $p,q\geq3$.
\end{remark}

\section{The Cabello configuration}
\label{sec:cabello}

We now consider the smallest member of the dual-grid family.
For $p=q=3$ there is only one cyclic step up to sign, so we take
$a=b=1$. Thus $\alpha=\beta=2\pi/3$, and
\eqref{eq:c-condition} gives $c^2=2$. We choose $c=\sqrt2$.
Hence $Z_{3,3}=A\sqcup B$ consists of two $(3,3)$-grids on the
Euclidean-dual quadrics $Q_{\sqrt2}$ and $\dual{Q_{\sqrt2}}$.
By Theorem~\ref{thm:dual-grid-KS}, it consists of $18$ rays and
supports nine distinguished contexts, with every ray occurring
exactly twice.

We show that $Z_{3,3}$ is the classical $18$-ray configuration of
Cabello, Estebaranz and Garc\'ia-Alcaine
\cite{CabelloEstebaranzGarciaAlcaine1996}. For the identification it
is convenient to use its realization inside the Peres $24$-ray
configuration \cite{Peres1991,WaegellAravind2011Peres}.

\subsection{Cabello inside the Peres configuration}
\label{subsec:cabello-peres}

The projective Peres configuration decomposes as $P=D\sqcup D'$,
where $D$ and $D'$ are two projectivized root configurations of type
$D_4$. Each component contains trisecants, and suitable pairs
$L\subset D$ and $L'\subset D'$ are mutually orthogonal, in the
sense that every ray of $L$ is orthogonal to every ray of $L'$.
Deleting the six rays of such a pair gives an $18$-ray, $9$-context
parity proof contained in the Peres configuration~\cite{WaegellAravind2011Peres}.

The projective feature relevant here is that the complement of a
trisecant in either $D_4$ component is a $(3,3)$-grid. Thus the
decomposition of the resulting $18$-ray configuration into
$D\setminus L$ and $D'\setminus L'$ has precisely the same incidence
structure as the decomposition $Z_{3,3}=A\sqcup B$.

\begin{lemma}
\label{lem:D4-minus-trisecant}
Let $D$ be the projectivized $D_4$ root configuration and let
$L\subset D$ be a trisecant. Then the nine points of $D\setminus L$
are the intersection points of three lines of one ruling and three
lines of the other ruling of a smooth quadric. In particular,
$D\setminus L$ is a $(3,3)$-grid.
\end{lemma}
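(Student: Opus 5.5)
Because the Weyl group $W(D_4)$ acts transitively on the trisecants, I will fix one explicit model of $D$ and one explicit trisecant $L$, and then exhibit the quadric directly. The transitivity claim is really the hard part, so I will state it carefully below. For the model, take $D=\{[e_i\pm e_j]:1\le i<j\le 4\}$, which has twelve points.

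\textbf{Step 1: the trisecants.} A projective line $\PP(U)$ meets $D$ in three points exactly when $U$ is a rank-two sublattice containing three roots, that is, an $A_2$ subsystem. Up to the signed permutations in $W(D_4)$, a representative is
\[
L=\{[e_1-e_2],\,[e_2-e_3],\,[e_1-e_3]\}.
\]
Other $A_2$'s, such as $\{e_1+e_2,\,-e_2+e_3,\,e_1+e_3\}$, are obtained from this one by sign changes and permutations. So the trisecants form a single $W(D_4)$-orbit, and I may work with this $L$. That leaves the nine points
\[
[e_1+e_2],\ [e_1+e_3],\ [e_2+e_3],\ [e_1\pm e_4],\ [e_2\pm e_4],\ [e_3\pm e_4].
\]

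\textbf{Step 2: the quadric.} I look for a quadric $Q$ vanishing on all nine points. A natural candidate is $x_4^2=x_1x_2+x_1x_3+x_2x_3$, or more generally $\sigma_2(x_1,x_2,x_3)=\lambda x_4^2$, where $\sigma_2$ is the second elementary symmetric polynomial.
\begin{itemize}
\item At $e_i+e_j$ with $i,j\le3$, we have $\sigma_2=1$ and $x_4=0$, which fails.
\item Trying instead $\sigma_2=\mu\,\sigma_1^2+\lambda x_4^2$: the points $e_i+e_j$ force $1=4\mu$, and the points $e_i\pm e_4$ force $0=\mu+\lambda$.
\end{itemize}
This gives
\[
Q:\ 4\sigma_2-\sigma_1^2+x_4^2=0,\qquad\text{that is,}\qquad 2\sigma_2-(x_1^2+x_2^2+x_3^2)+x_4^2=0.
\]
I will verify that this form has nonzero determinant and split signature $(2,2)$, so that $Q$ is smooth and doubly ruled over $\RR$. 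The form $x_1^2+x_2^2+x_3^2-2\sigma_2$ restricted to $x_1+x_2+x_3=0$ is positive, while on $(1,1,1)$ it equals $-3$; together with $+x_4^2$ the signature is $(2,2)$.

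\textbf{Step 3: the six lines.} The nine points should split as three lines of each ruling, meeting in a $3\times3$ pattern. Candidate lines are $\langle e_1+e_4,\,e_2+e_3\rangle$ and its variants, $\langle e_1-e_4,\,e_2+e_3\rangle$, and so on. For each $k\le3$, with $\{i,j\}$ the complement of $k$ in $\{1,2,3\}$, I check that the lines $\langle e_k+e_4,\,e_i+e_j\rangle$ and $\langle e_k-e_4,\,e_i+e_j\rangle$ lie entirely on $Q$. This means the polar pairing of the two spanning points vanishes: evaluating the bilinear form gives $(1)(1)+(1)(1)\cdot$ cross terms, and should come out to zero.

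I then sort these six lines into rulings. Lines of the same ruling are disjoint, and lines of opposite rulings meet. Each point lies on exactly one line of each family:
\begin{itemize}
\item $e_k\pm e_4$ lies on one line;
\item $e_i+e_j$ lies on two lines.
\end{itemize}
The point $e_k+e_4$ lies on only one line so far, so the families must be arranged so that the nine points are the intersections of three lines of one ruling with three lines of the other.

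\textbf{Main obstacle.} The hard part is identifying the correct six lines. A cleaner route is to find two triples of lines through the nine points and check incidence directly. If this does not work, I would let a symbolic computation find the two rulings of $Q$ containing the points.
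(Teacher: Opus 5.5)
\textbf{Gap in Step 3.} Steps 1 and 2 are fine. The trisecants are the sixteen $A_2$-subsystems, and signed permutations act transitively on them. The form $q=x_4^2-(x_1^2+x_2^2+x_3^2)+2\sigma_2$ vanishes on the nine points and has signature $(2,2)$. However, the content of the lemma is that the nine points are cut out by three lines of each ruling, and that is exactly what Step 3 fails to deliver.

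Your lines $\langle e_k\pm e_4,\,e_i+e_j\rangle$ with $\{i,j,k\}=\{1,2,3\}$ do not lie on $Q$. Restricting $q$ to $s(e_1+e_4)+t(e_2+e_3)=(s,t,t,s)$ gives $4st\not\equiv 0$, so the polar pairing is nonzero. The same happens with $e_1-e_4$. These lines are secants through only two of the nine points each. Your own incidence count shows this: six lines carry only $12$ point--line incidences, whereas a $(3,3)$-grid needs $18$, with every line through three points and every point on two lines. A quadric through the nine points proves nothing, since any nine points of $\mathbb P^3$ lie on a quadric. Falling back on ``let a symbolic computation find the rulings'' is not an argument, so the proof stops short of the statement.

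\textbf{The fix.} Collinearity requires the index of the $e_4$-points to lie \emph{in} $\{i,j\}$, not in its complement, because $(e_i+e_4)+(e_j-e_4)=e_i+e_j$. The six lines are therefore
\[
\ell_{ij}=\langle e_i+e_4,\,e_j-e_4\rangle\ni e_i+e_j,
\qquad i\neq j\in\{1,2,3\}.
\]
They split by cyclic orientation into $\{\ell_{12},\ell_{23},\ell_{31}\}$ and $\{\ell_{21},\ell_{32},\ell_{13}\}$.
\begin{itemize}
\item Within each triple the lines are pairwise skew; for example, $e_1+e_4,\,e_2-e_4,\,e_2+e_4,\,e_3-e_4$ are linearly independent.
\item $\ell_{ij}$ meets the three lines of the other triple at $e_i+e_j$ (on $\ell_{ji}$), at $e_i+e_4$ (on $\ell_{ik}$) and at $e_j-e_4$ (on $\ell_{kj}$), where $k$ is the third index.
\item These intersections exhaust the nine points.
\end{itemize}
Three pairwise skew lines with three common transversals already determine a unique smooth doubly ruled quadric. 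So Step 2 becomes only a consistency check; for instance, $q$ vanishes identically on $(s,t,0,s-t)$.

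\textbf{Comparison with the paper.} Once repaired, your argument is the coordinate verification the paper also gives. The paper works in the Peres model, lists the two rulings explicitly in the supplementary material, and checks all $16$ complements rather than reducing to one by symmetry. The paper additionally gives a conceptual proof. The $D_4$ configuration is $(3,4)$-geproci. Deleting a trisecant leaves a $(3,3)$-geproci set. In characteristic zero, every $(3,3)$-geproci set is a grid.
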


\begin{proof}
Using the explicit Peres coordinates recorded in the supplementary
material (Section \ref{app:cabello}), one checks directly that the nine points of $D\setminus L$
lie on six trisecants, three in each ruling, and hence form a
$(3,3)$-grid.

There is also a conceptual proof using geproci configurations. Recall
that an $(a,b)$-geproci set in $\PP^3$ is a finite set of $ab$ points
whose general projection to $\PP^2$ is a complete intersection of
plane curves of degrees $a$ and $b$. The projectivized $D_4$ root
configuration is a $(3,4)$-geproci set
\cite[Chapter 3]{ChiantiniEtAl2022}. Removing the three points on a
trisecant leaves a $(3,3)$-geproci configuration
\cite[Lemma 4.5]{ChiantiniEtAl2022}. In characteristic zero, every
$(3,3)$-geproci configuration is a grid
\cite[Theorem 5.12]{chiantini2021sets}.
\end{proof}

We can now identify the two Euclidean configurations.

\begin{theorem}
\label{thm:cabello-identification}
The dual-grid configuration $Z_{3,3}$ is orthogonally equivalent to
the Cabello $18$--$9$ configuration. More precisely, there are
mutually polar trisecants $L\subset D$ and $L'\subset D'$ in the two
$D_4$ components of the Peres configuration such that
\[
Z_{3,3}\simeq_{\mathrm{orth}}
(D\setminus L)\sqcup(D'\setminus L').
\]
Under this equivalence, the two grids are supported on
Euclidean-dual quadrics, and the contexts $C_{ij}$ correspond to the
nine contexts of the Cabello parity proof.
\end{theorem}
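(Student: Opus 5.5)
The plan is to compute everything explicitly in the coordinates of Section~\ref{sec:grids} with $\alpha=\beta=2\pi/3$ and $c=\sqrt2$, and match the result against the standard Peres coordinates. The vectors are
\[
a_{ij}=\bigl(u(\tfrac{2\pi}{3}(i-j)),\,-\sqrt2\,u(\tfrac{2\pi}{3}(i+j))\bigr),
\qquad
b_{ij}=\bigl(\sqrt2\,u(\tfrac{2\pi}{3}(i-j)),\,u(\tfrac{2\pi}{3}(i+j))\bigr).
\]
Each has only finitely many distinct planar components, drawn from the cube roots of unity. First I will identify the three ``missing'' points of each grid. Each grid is a $(3,3)$-grid, and by Lemma~\ref{lem:D4-minus-trisecant} it should be completed to a $D_4$ configuration by a trisecant. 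The natural candidate for the line added to $A$ is a line on which the relevant rotation orbit structure degenerates, for instance the polar lines $\{u=0\}$ and $\{v=0\}$ of the decomposition $V=\RR^2\oplus\RR^2$. These are mutually polar, hence every ray of one is orthogonal to every ray of the other. The three points on each line are chosen so that each, together with the grid, lies on the lines of the two rulings, i.e.\ the fourth point on the secants through the grid lines. I will take as $L'$ the three points of $\{u=0\}$ at angles $\tfrac{\pi}{6}+\tfrac{2\pi k}{3}$ (or whichever triple is forced), and take $L\subset\{v=0\}$ symmetrically.

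Next I will check that $D:=A\sqcup L$ and $D':=B\sqcup L'$ are projectivized $D_4$ root systems. The cleanest route is to exhibit $12$ pairwise-distinct rays and verify that their Gram matrix, after normalizing all vectors to squared length $2$, has entries in $\{0,\pm1,\pm2\}$ with the $D_4$ pattern: each ray is orthogonal to exactly three others, and the six orthogonal pairs form three mutually orthogonal quadruples. Then I will verify that $D\sqcup D'$ is the Peres configuration. Here I will produce an explicit orthogonal map $\Phi$ sending $D'$ to the $12$ rays with two nonzero coordinates $\pm1$ and $D$ to the $12$ rays $(\pm1,\pm1,\pm1,\pm1)$, $(2,0,0,0)$, etc. Since the $D_4$ root system determines its orthogonal frame up to $W(F_4)$, it suffices to map one orthogonal quadruple of $D'$ to the standard basis appropriately scaled, and then check the remaining rays by the computation recorded in the supplementary material. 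Polarity of $L$ and $L'$ is immediate from the block decomposition.

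Finally I will match the contexts. By Theorem~\ref{thm:dual-grid-KS} the nine $C_{ij}$ form a parity proof on $Z_{3,3}=(D\setminus L)\sqcup(D'\setminus L')$. The Cabello $18$--$9$ set obtained by deleting a mutually orthogonal pair of trisecants from Peres is, by \cite{WaegellAravind2011Peres}, determined up to the symmetry group of Peres, and its contexts are exactly the orthogonal bases of Peres that avoid $L\cup L'$. I will verify that there are exactly nine such bases, each of the form two points of $A$ plus two of $B$. This follows by counting: Peres has $24$ bases, and each ray of $L\cup L'$ kills the bases through it. Since the $C_{ij}$ are nine distinct such bases, they coincide with the Cabello contexts.

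The main obstacle is producing the explicit orthogonal map to standard Peres coordinates and pinning down the correct trisecants $L,L'$. I will try first to guess $L,L'$ as the triples on the axis planes $u=0$ and $v=0$. If the $D_4$ Gram check fails, I will instead locate the lines by solving for the third intersection points of the grid's ruling lines with the complementary quadric structure.
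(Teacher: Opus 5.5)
Your plan is correct, but it runs in the opposite direction from the paper. The paper fixes $L=\{P_1,P_5,P_{12}\}$ and $L'=\{P_{16},P_{18},P_{24}\}$ in Peres coordinates and writes down one orthogonal matrix $R$. It then checks by substitution that $R(A)=D\setminus L$, that $R(B)=D'\setminus L'$, and that the nine $C_{ij}$ land on the nine Cabello bases. Only later (Theorem~\ref{thm:F4-completion}, via Theorem~\ref{thm:collinear-completion}) are the deleted trisecants recognized as completion orbits.

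You instead build $\widehat Z_3$ in grid coordinates and recognize $A\sqcup X_3$ and $B\sqcup X_3'$ as two $D_4$'s whose union is $F_4$. You then recover the Cabello contexts by deleting $X_3\sqcup X_3'$ and counting bases. Your guess is the right one:
\begin{itemize}
\item $\pi/6+2\pi k/3$ agrees modulo $\pi$ with $\pi/2+2\pi k/3$;
\item the paper's $R$ sends $(0,1,0,0)$, $(\sqrt3,1,0,0)$, $(-\sqrt3,1,0,0)$ to $P_1,P_5,P_{12}$, and the corresponding points of $\{u=0\}$ to $P_{16},P_{18},P_{24}$.
\end{itemize}
The pairing matters: $A\sqcup X_3'$ has rays meeting at $45^\circ$, so it is not orthogonally a $D_4$, and your Gram test would catch a wrong choice. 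Your route makes the trisecants canonical and yields Theorem~\ref{thm:F4-completion} at the same time. The paper's route is a single uniform verification that needs no root-system or counting input.

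Some details need repair, though none is fatal.
\begin{itemize}
\item The completion points do not lie on ruling lines. They are the third points of transversals through grid points on two distinct lines of one ruling (Lemma~\ref{lem:spectral-transversals}), so there is no ``fourth point''.
\item Your $D_4$ pattern is miscounted: the $12$ rays have $18$ orthogonal pairs and split into three orthogonal bases. A clean criterion is this: $12$ distinct rays, normalized to norm $2$ with integral Gram matrix, span an even lattice. Its norm-$2$ vectors form a simply-laced root system of rank $\le 4$ with at least $24$ roots, and only $D_4$ qualifies.
\item Mapping an orthogonal quadruple of $D'$ to the coordinate frame sends $D'$ to the rays $e_i$, $(\pm1,\pm1,\pm1,\pm1)$, not to those with two nonzero entries. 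This is harmless after swapping roles.
\item The relative-position step need not be outsourced to the supplementary computation. Normalize $D$ to norm $1$ and $D'$ to norm $2$.
\begin{itemize}
\item The products of $a_{ij}$ with $b_{kl}$ are $\tfrac23(\cos\Delta^--\cos\Delta^+)$, where $\Delta^\pm=\theta^\pm_{ij}-\theta^\pm_{kl}\in\tfrac{2\pi}{3}\ZZ$.
\item The products of $X_3$ with $B$, and of $A$ with $X_3'$, are $\pm\tfrac{2}{\sqrt3}\cos\delta$ with $\delta\in-\tfrac{\pi}{2}+\tfrac{2\pi}{3}\ZZ$.
\end{itemize}
All of these lie in $\{0,\pm1\}$. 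Hence $D$ sits in the dual lattice $D_4^*$, whose $24$ unit vectors are exactly the short roots of $F_4$.
\item Finally, ``$24$ bases minus those through $L\cup L'$'' gives $9$ only once you know that $15$ bases meet $L\cup L'$. That follows because each ray lies in four bases, no two rays of $L$ (or of $L'$) are orthogonal, and each of the nine pairs in $L\times L'$ lies in exactly one basis, namely a $D_{r,s}$.
\end{itemize}
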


\begin{proof}
We use the standard Peres realization recorded in
Section~\ref{app:cabello} of the Supplementary Material and choose 
$L=\{P_1,P_5,P_{12}\}$ and
$L'=\{P_{16},P_{18},P_{24}\}$.
These trisecants are mutually polar. By
Lemma~\ref{lem:D4-minus-trisecant}, their complements are
$(3,3)$-grids.

For $Z_{3,3}$, with $\alpha=\beta=2\pi/3$ and $c=\sqrt2$, consider
the matrix
\[
R=
\begin{pmatrix}
0&1&0&0\\
\frac1{\sqrt3}&0&-\sqrt{\frac23}&0\\
\frac1{\sqrt3}&0&\frac1{\sqrt6}&-\frac1{\sqrt2}\\
\frac1{\sqrt3}&0&\frac1{\sqrt6}&\frac1{\sqrt2}
\end{pmatrix}.
\]
Its columns form an orthonormal basis, so $R\in O(4)$. Direct
substitution gives
$R(A)=D\setminus L$ and $R(B)=D'\setminus L'$.
Under the same transformation, the nine contexts $C_{ij}$ are mapped
onto the nine orthogonal bases of the corresponding Cabello
$18$--$9$ parity proof. Hence $R$ gives the claimed orthogonal
equivalence.

The complete ray correspondence and an exact verification are
provided in the supplementary material.
\end{proof}

Theorem~\ref{thm:cabello-identification} therefore identifies the
Cabello configuration not merely as an isolated $18$-ray example,
but as the first member of the dual-grid family. Its
eighteen rays split into two $(3,3)$-grids on Euclidean-dual
quadrics, and its nine contexts are precisely the polar-secant
contexts of the dual-grid construction.

The trisecants removed in this description point to additional
geometry. Each $(3,3)$-grid is completed by the three points of its
trisecant to a projectivized $D_4$ configuration. Rather than treating
this as a special feature of the Cabello configuration, we shall see
in Section~\ref{sec:completion} that the same phenomenon extends
canonically to every  configuration $Z_{n,n}$.

\section{Collinear completions of diagonal dual grids}
\label{sec:completion}

We now specialize to the diagonal family $Z_{n,n}$, with $n\geq3$
odd. In this case the two rulings of each grid determine two
distinguished lines, intrinsic to the Euclidean pair consisting of
the supporting quadric and the ambient polarity. These spectral lines
support distinguished $n$-point orbits which are forced by the
collinear incidence of the grid.

This leads to a canonical completion
\[
\widehat Z_n=Z_{n,n}\sqcup X_n\sqcup X'_n
\]
by $2n$ additional rays. As above, the notation suppresses the
dependence on the admissible cyclic steps $(a,b)$; accordingly,
$\widehat Z_n$ denotes the completion of the chosen realization of
$Z_{n,n}$. Although the completion is defined projectively, through
collinear incidence, the added rays reveal a second canonical family
of orthogonal contexts.

\subsection{Spectral lines}
\label{subsec:spectral-lines}

Let $Q\subset\PP(V)$ be a smooth real quadric, represented by a
nondegenerate symmetric matrix $M$, and let $h$ be the Euclidean
inner product. Since $M$ is self-adjoint with respect to $h$, its
eigenspaces are mutually orthogonal.

For the normal form
\[
Q_c\colon
c^2(u_1^2+u_2^2)-(v_1^2+v_2^2)=0
\]
of Section~\ref{sec:grids}, the corresponding self-adjoint operator
has two two-dimensional eigenspaces,
\[
V_+=\RR^2\oplus0,
\qquad
V_-=0\oplus\RR^2.
\]
Their projectivizations
\[
L_+=\PP(V_+),
\qquad
L_-=\PP(V_-)
\]
will be called the \emph{spectral lines} of $Q_c$. They are mutually
polar with respect to the Euclidean polarity:
$L_-=L_+^{\perp_h}$.

The definition is intrinsic. If a smooth real quadric $Q$ is
represented, up to scale, by a self-adjoint operator with two real
eigenvalues of multiplicity two, then the projectivizations of the
two eigenspaces form an unordered pair of spectral lines. This pair
is preserved by orthogonal changes of coordinates.

For $Z_{n,n}$, these lines interact with the two rulings
in a particularly rigid way. We make this explicit for one ruling;
the other is obtained by interchanging the two orthogonal summands.

Let $G_n=\{A_{ij}\}\subset Q_c$ be a cyclic $(n,n)$-grid, with
\[
\alpha=\frac{2\pi a}{n},
\qquad
\beta=\frac{2\pi b}{n},
\qquad
a,b\in\ZZ_n^\times.
\]
For the ruling obtained by fixing the first index, define
\[
P_r=
\left[
u\left(\frac{\pi}{2}+\frac{2\pi r}{n}\right):0
\right],
\qquad r\in\ZZ_n,
\]
and set $X_n=\{P_r:r\in\ZZ_n\}\subset L_+$.

\begin{lemma}[Transversals through the spectral line]
	\label{lem:spectral-transversals}
	Let $R_0,\ldots,R_{n-1}$ be the lines of the ruling of $G_n$
	obtained by fixing the first index. For $i\neq k$, the line through
	$A_{ij}\in R_i$ and $A_{k\ell}\in R_k$ meets $L_+$ if and only if
	\[
	ai+bj\equiv ak+b\ell\pmod n.
	\]
	In this case its intersection with $L_+$ is $P_{ak-bj}$.
	
	Conversely, for each $P_r\in X_n$ and  $i\neq k$, there is a
	unique transversal through $P_r$ meeting $R_i$ and $R_k$, and its
	intersection points with these two ruling lines belong to the grid.
\end{lemma}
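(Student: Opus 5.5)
The plan is a direct coordinate computation in the normal form of Section~\ref{sec:grids}, followed by the standard fact about transversals to two skew lines.

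Recall that $A_{ij}=[a_{ij}]$ with
$a_{ij}=\bigl(u(\theta_{ij}^-),-c\,u(\theta_{ij}^+)\bigr)$, where $\theta^\pm_{ij}=i\alpha\pm j\beta$. The line $L_+$ is the locus $v=0$. So the line $\langle A_{ij},A_{k\ell}\rangle$ meets $L_+$ if and only if some nonzero combination $\lambda a_{ij}+\mu a_{k\ell}$ has vanishing second component. This means $\lambda u(\theta^+_{ij})+\mu u(\theta^+_{k\ell})=0$, i.e.\ $u(\theta^+_{ij})=\pm u(\theta^+_{k\ell})$. Equivalently, $\theta^+_{ij}-\theta^+_{k\ell}=2\pi\bigl(a(i-k)+b(j-\ell)\bigr)/n$ is a multiple of $\pi$.

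Since $n$ is odd, $2\pi s/n\in\pi\ZZ$ forces $s\equiv0\pmod n$. Hence the condition is exactly $ai+bj\equiv ak+b\ell\pmod n$. In that case $\theta^+_{ij}\equiv\theta^+_{k\ell}\pmod{2\pi}$, so the minus sign never occurs. The intersection point is therefore $[a_{ij}-a_{k\ell}]=[u(\theta^-_{ij})-u(\theta^-_{k\ell}):0]$.

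This vector is nonzero. Otherwise $a_{ij}=a_{k\ell}$, contradicting Lemma~\ref{lem:dual-grid-distinct}, because $i\neq k$.

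Next, I would use the chord identity $u(x)-u(y)\propto u\bigl(\tfrac{x+y}{2}+\tfrac{\pi}{2}\bigr)$. Here $x+y=\bigl((i+k)a-(j+\ell)b\bigr)2\pi/n$. Substituting $b\ell\equiv ai+bj-ak$ gives $(i+k)a-(j+\ell)b\equiv2(ak-bj)\pmod n$.

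The one delicate point, which I expect to be the main obstacle, is halving an angle that is only known modulo $2\pi$. It is harmless here because we work projectively: a shift of $x+y$ by $2\pi m$ shifts the half-angle by $\pi m$, and this does not change the ray. So the intersection point is $[u(\tfrac{\pi}{2}+\tfrac{2\pi(ak-bj)}{n}):0]=P_{ak-bj}$, as claimed. I would also check that the representatives of residues are chosen consistently, so that $ak-bj$ is only defined modulo $n$. Again, this causes no problem, because $P_r$ depends on $r$ only modulo $n$, the angle $2\pi r/n$ being defined modulo $2\pi$.

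For the converse, fix $P_r$ and $i\neq k$. The lines $R_i$ and $R_k$ are distinct lines of the same ruling of the smooth quadric $Q_c$, hence skew. Moreover $P_r\notin Q_c$, because at a point with $v=0$ the equation of $Q_c$ reduces to $c^2\|u\|^2=0$, which is impossible since $c\neq0$. Then the plane $\langle P_r,R_i\rangle$ meets $R_k$ in a single point, which gives a unique line through $P_r$ meeting both $R_i$ and $R_k$.

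To see that its intersection points lie on the grid, I would exhibit grid points explicitly. Since $a,b$ are units, choose $j\equiv b^{-1}(ak-r)$ and $\ell\equiv b^{-1}(ai+bj-ak)$. Then $ai+bj\equiv ak+b\ell$ and $ak-bj\equiv r$. By the first part, the line $\langle A_{ij},A_{k\ell}\rangle$ passes through $P_r$, and it meets $R_i$ at $A_{ij}$ and $R_k$ at $A_{k\ell}$. By uniqueness, it is the transversal, so its intersection points with $R_i$ and $R_k$ belong to $G_n$.
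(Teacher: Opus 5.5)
Your proposal is correct and follows the paper's proof essentially step by step. The shared steps are the criterion via vanishing of the $V_-$ component, the chord identity $u(x)-u(y)\propto u\bigl(\tfrac{x+y}{2}+\tfrac{\pi}{2}\bigr)$ with the congruence $a(i+k)-b(j+\ell)\equiv 2(ak-bj)$, and the explicit choice $j=b^{-1}(ak-r)$, $\ell=b^{-1}(ai-r)$ for the converse.

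The one real difference is the uniqueness step. The paper infers uniqueness from the uniqueness of $(j,\ell)$, which only rules out other transversals through grid points. Your argument instead uses the plane $\langle P_r,R_i\rangle$, which meets the skew line $R_k$ in a single point because $P_r\notin Q_c$. This gives uniqueness among all lines, and so it actually justifies that the transversal meets $R_i$ and $R_k$ in grid points.
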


\begin{proof}
	Write
	\[
	A_{ij}
	=
	[u(i\alpha-j\beta):-c\,u(i\alpha+j\beta)].
	\]
	The line $\langle A_{ij},A_{k\ell}\rangle$ meets $L_+$ precisely
	when a nonzero linear combination of the two representatives has
	vanishing $V_-$ component. Equivalently,
	$u(i\alpha+j\beta)$ and $u(k\alpha+\ell\beta)$ are proportional.
	Since $n$ is odd, this is equivalent to
	\[
	ai+bj\equiv ak+b\ell\pmod n.
	\]
	
	Under this condition the intersection with $L_+$ is represented, up
	to a nonzero scalar, by
	$u(i\alpha-j\beta)-u(k\alpha-\ell\beta)$. Using
	\[
	u(x)-u(y)
	=
	2\sin\left(\frac{x-y}{2}\right)
	u\left(\frac{x+y}{2}+\frac{\pi}{2}\right),
	\]
	its direction is
	\[
	\frac{\pi}{2}
	+
	\frac{\alpha(i+k)-\beta(j+\ell)}{2}.
	\]
	The congruence above gives
	$a(i+k)-b(j+\ell)\equiv2(ak-bj)\pmod n$, so this direction is
	projectively equal to
	$\pi/2+2\pi(ak-bj)/n$. Hence the intersection point is
	$P_{ak-bj}$.
	
	Conversely, fix $P_r\in X_n$ and $i\neq k$, and set
	\[
	j=b^{-1}(ak-r),
	\qquad
	\ell=b^{-1}(ai-r)
	\]
	in $\ZZ_n$. Then
	$ai+bj=ak+b\ell$ and $ak-bj=r$. By the first part,
	$P_r,A_{ij},A_{k\ell}$ are collinear. The displayed formulas
	determine $j$ and $\ell$ uniquely, and hence the transversal is
	unique.
\end{proof}

The lemma shows that the points of $X_n$ are not an additional
choice: they are forced by the transversals of the grid.

\subsection{Collinear completion of a diagonal grid}
\label{subsec:collinear-completion}

Following \cite{chiantini2023combinatorics}, we say that a finite point set $Z$ is collinearly complete with
respect to a finite line arrangement $\mathcal L$ if every line
meeting at least three members of $\mathcal L$ and containing a
point of $Z$ contains in $Z$ all its intersection points with
$\mathcal L$.

\begin{theorem}[Collinear completion of $Z_{n,n}$]
	\label{thm:collinear-completion}
	Let $n\geq3$ be odd and let $G_n\subset Q_c$ be a cyclic
	$(n,n)$-grid occurring in $Z_{n,n}$. Choose one of its two rulings,
	with supporting lines
	$\mathcal R=\{R_0,\ldots,R_{n-1}\}$.
	
	For the ruling obtained by fixing the first index, let $L=L_+$ and
	let $X_n\subset L_+$ be the orbit of
	Lemma~\ref{lem:spectral-transversals}. For the opposite ruling, let
	$L=L_-$ and define the analogous $n$-point orbit by interchanging
	the two orthogonal summands.
	
	Then the corresponding $n$-point orbit on $L$ is the unique smallest
	set $X$ such that $G_n\cup X$ is collinearly complete with respect to
	$\mathcal R\cup\{L\}$. In particular, the completion consists of
	exactly $n$ points and is independent of the spectral parameter $c$.
\end{theorem}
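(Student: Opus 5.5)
The plan is to classify every line $\lambda$ that meets at least three members of $\mathcal R\cup\{L\}$, treating the ruling with $L=L_+$ in full; the opposite ruling with $L=L_-$ then follows by interchanging the two orthogonal summands. Two preliminary facts drive the whole argument.

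\textbf{Preliminary facts.} First, $L_\pm$ has no real point on $Q_c$. Indeed, setting $v=0$ in $c^2(u_1^2+u_2^2)-(v_1^2+v_2^2)=0$ forces $u=0$, and symmetrically for $L_-$. In particular $L$ meets no line of $\mathcal R$, and $X_n\cap Q_c=\varnothing$. Second, the lines $R_0,\dots,R_{n-1}$ lie in one ruling and are therefore pairwise skew.

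\textbf{Case 1: $\lambda$ meets at least three lines of $\mathcal R$.} Then $\lambda$ meets $Q_c$ in at least three points, so $\lambda\subset Q_c$. Since the $R_i$ are pairwise skew, $\lambda$ belongs to the opposite ruling. Consequently $\lambda$ contains no point of $X_n$ and does not meet $L$. If $\lambda$ contains a grid point, then it is one of the $n$ opposite-ruling lines of $G_n$, and its intersections with the members of $\mathcal R$ are exactly grid points. So these lines impose no new points.

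\textbf{Case 2: $\lambda$ meets exactly two lines $R_i,R_k$ ($i\neq k$) and also $L$.} Suppose first that $\lambda$ contains a grid point, say $A_{ij}\in R_i$. Since $A_{ij}\notin R_k$, the plane $\langle A_{ij},R_k\rangle$ meets $L$ in a single point; here we use that $L\not\subset$ this plane, which holds because $L\cap R_k=\varnothing$. Hence there is exactly one such line $\lambda$. Now put $\ell=b^{-1}(ai+bj-ak)$. By Lemma~\ref{lem:spectral-transversals}, the line $\langle A_{ij},A_{k\ell}\rangle$ is such a transversal and meets $L_+$ at $P_{ak-bj}$. By uniqueness, $\lambda$ is this line. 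Therefore $\lambda\cap(\mathcal R\cup\{L\})=\{A_{ij},A_{k\ell},P_{ak-bj}\}$, which lies in $G_n\cup X_n$. Suppose instead that $\lambda$ contains a point of $X_n$ rather than a grid point. The converse part of Lemma~\ref{lem:spectral-transversals} shows that its two other intersection points lie in the grid. Together with Case 1, this proves that $G_n\cup X_n$ is collinearly complete.

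\textbf{Minimality, uniqueness and independence of $c$.} Let $X$ be any set such that $G_n\cup X$ is collinearly complete. Fix $r\in\ZZ_n$ and indices $i\neq k$, and take $j=b^{-1}(ak-r)$. The transversal through $A_{ij}$ meeting $R_k$ and $L$ meets $L$ at $P_r$, and it contains the grid point $A_{ij}$, so completeness forces $P_r\in X$. Hence $X_n\subseteq X$. Thus $X_n$ is the unique smallest completion, and it has exactly $n$ points because the directions $\pi/2+2\pi r/n$ are pairwise distinct modulo $\pi$ for odd $n$. Finally, the formula defining $P_r$ does not involve $c$.

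The step I expect to need the most care is the uniqueness argument in Case 2. It shows that every transversal through a grid point automatically meets $L$ inside $X_n$, rather than at some other point of $L$ that would enlarge the completion. It relies on the plane argument combined with the existence part of Lemma~\ref{lem:spectral-transversals}.
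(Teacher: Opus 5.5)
Your proof is correct and follows essentially the same route as the paper. The shared steps are the two-case classification of lines meeting three members of $\mathcal R\cup\{L\}$ (lines of the opposite ruling inside $Q_c$ versus lines meeting $L$ and exactly two $R_i$), both parts of Lemma~\ref{lem:spectral-transversals} in the second case, and forcing each $P_r$ by a grid transversal. Your plane argument, that the transversal from $A_{ij}$ to $R_k$ and $L$ is unique, is a genuine improvement. The paper applies the lemma, which is stated only for lines through two grid points, to a line known a priori to contain just one grid point, and your argument supplies exactly that missing identification.
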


\begin{proof}
	We first consider the ruling of
	Lemma~\ref{lem:spectral-transversals}, for which $L=L_+$. By that
	lemma, every transversal through a grid point and meeting two
	distinct lines of $\mathcal R$ meets $L_+$ in a point of $X_n$.
	Conversely, every point of $X_n$ occurs in this way. Since $b$ is a
	unit modulo $n$, for fixed $k$ the residues $ak-bj$, with
	$j\in\ZZ_n$, run through all of $\ZZ_n$. Hence these intersection
	points are precisely the $n$ distinct points of $X_n$.
	
	We next show that $G_n\cup X_n$ is collinearly complete. Let $T$ be
	a line meeting at least three members of
	$\mathcal R\cup\{L_+\}$ and containing a point of $G_n\cup X_n$.
	Since the arrangement consists of the lines of $\mathcal R$ together
	with the single line $L_+$, there are only two cases: either $T$
	meets at least three lines of $\mathcal R$, or it meets $L_+$ and at
	least two lines of $\mathcal R$.
	
	Suppose first that $T$ meets at least three lines of $\mathcal R$.
	Since the lines of a ruling are pairwise disjoint, these intersections
	give at least three distinct points of $Q_c$. A line not contained in
	a smooth quadric meets it in at most two points, so $T\subset Q_c$
	and hence $T$ belongs to the opposite ruling. Since
	$L_+\cap Q_c=\varnothing$, the line $T$ does not meet $L_+$. Moreover,
	if $T$ contains a point of $G_n$, it is the line of the opposite
	ruling through that grid point, hence one of the $n$ selected lines
	of that ruling. All its intersections with the lines of $\mathcal R$
	therefore belong to $G_n$. The alternative that $T$ contains a point
	of $X_n$ cannot occur, since $X_n\subset L_+$ and
	$T\cap L_+=\varnothing$.
	
	Suppose instead that $T$ meets $L_+$ and at least two distinct lines
	$R_i,R_k$. Since $L_+\cap Q_c=\varnothing$, the line $T$ is not
	contained in $Q_c$ and therefore meets $Q_c$ in at most two points.
	Hence $R_i$ and $R_k$ are the only members of $\mathcal R$ met by
	$T$. If $T$ contains a point of $G_n$, its intersection with $L_+$
	belongs to $X_n$ by Lemma~\ref{lem:spectral-transversals}. If it
	contains a point of $X_n$, the converse part of the same lemma shows
	that its intersections with $R_i$ and $R_k$ are grid points. Thus in
	either case all intersections of $T$ with
	$\mathcal R\cup\{L_+\}$ belong to $G_n\cup X_n$.
	
	It remains to prove that the completion is forced. Every
	$X_r\in X_n$ lies on a transversal meeting $L_+$ and two distinct
	lines of $\mathcal R$ at grid points. Hence, if $Y\subset L_+$ is
	such that $G_n\cup Y$ is collinearly complete with respect to
	$\mathcal R\cup\{L_+\}$, then $X_r\in Y$ for every $r$. Therefore
	$X_n\subseteq Y$, and $X_n$ is the unique smallest completion.
	
	The construction is independent of $c$, since the factor $c$ cancels
	when the $V_-$ components are eliminated in
	Lemma~\ref{lem:spectral-transversals}. For the opposite ruling the
	same argument applies after interchanging the orthogonal summands
	$V_+$ and $V_-$. The corresponding spectral line is then $L_-$, and
	the analogous $n$-point orbit on $L_-$ is again the unique smallest
	collinear completion.
\end{proof}

\begin{remark}
	The notion of collinear completeness was introduced in
	\cite{chiantini2023combinatorics} in the study of geproci sets arising
	from configurations of skew lines in projective three-space.
	The configuration $G_n\cup X_n$ is an instance of the
	\emph{standard construction} studied in
	\cite{ChiantiniEtAl2022,chiantini2023combinatorics, Favacchio2025PGL2}.
	In the present cyclic setting, the additional line is the spectral
	line $L_+$, and the points of $X_n$ are exactly those forced by the
	transversals of the ruling lines.
\end{remark}

\subsection{The completed dual-grid configuration}
\label{subsec:completed-dual-grid}

We now apply Theorem~\ref{thm:collinear-completion} to the two grids
of $Z_{n,n}=A\sqcup B$. For either grid,
the two rulings determine the same two $n$-point subsets of the
spectral lines. We denote them by
\[
X_n\subset L_+,
\qquad
X'_n\subset L_-.
\]
Thus the pair of dual grids canonically determines the two completion
orbits $X_n$ and $X'_n$. Since $L_+$ and $L_-$ are mutually polar,
every ray of $X_n$ is orthogonal to every ray of $X'_n$.

We define the \emph{completed dual-grid configuration} by
\[
\widehat Z_n
=
Z_{n,n}\sqcup X_n\sqcup X'_n.
\]
It consists of $2n(n+1)$ rays. With the notation of
Section~\ref{sec:grids}, the completion orbits may be indexed as
\[
P_r=
\left[u\left(\frac{\pi}{2}+\frac{2\pi r}{n}\right):0\right],
\qquad
P'_s=
\left[0:u\left(\frac{\pi}{2}+\frac{2\pi s}{n}\right)\right],
\qquad r,s\in\ZZ_n.
\]

Although the completion is defined by projective incidence, the
added rays reveal a second canonical family of orthogonal contexts.

\begin{theorem}[Orthogonal contexts from the completion]
	\label{thm:completion-contexts}
	Let $n\geq3$ be odd, and let
	$\widehat Z_n=Z_{n,n}\sqcup X_n\sqcup X'_n$ be the canonical
	collinear completion of $Z_{n,n}$. Write
	\[
	\alpha=\frac{2\pi a}{n},
	\qquad
	\beta=\frac{2\pi b}{n},
	\qquad
	a,b\in\ZZ_n^\times.
	\]
	For every $(r,s)\in\ZZ_n^2$, there is a unique
	$(i,j)\in\ZZ_n^2$ such that
	\[
	D_{r,s}=\{P_r,P'_s,A_{ij},B_{ij}\}
	\]
	is an orthogonal context. Explicitly,
	\[
	i=(2a)^{-1}(r+s),
	\qquad
	j=(2b)^{-1}(s-r)
	\quad\text{in }\ZZ_n.
	\]
	Consequently, the completion determines a canonical family of
	$n^2$ additional orthogonal contexts. Each grid ray $A_{ij}$ and
	$B_{ij}$ occurs in exactly one of them, while each completion ray
	$X_r$ and $X'_s$ occurs in exactly $n$.
\end{theorem}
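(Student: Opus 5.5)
The plan is to compute all six inner products directly in the decomposition $V=\RR^2\oplus\RR^2$, using the explicit representatives from Section~\ref{sec:grids}:
\[
a_{ij}=(u(\theta^-_{ij}),-c\,u(\theta^+_{ij})),\qquad b_{ij}=(c\,u(\theta^-_{ij}),u(\theta^+_{ij})),
\]
\[
p_r=(u(\pi/2+2\pi r/n),0),\qquad p'_s=(0,u(\pi/2+2\pi s/n)),
\]
where $\theta^\pm_{ij}=i\alpha\pm j\beta$.

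Three of the six relations hold for every choice of indices. The relation $P_r\perp P'_s$ is automatic, since $L_-=L_+^{\perp_h}$. The relation $A_{ij}\perp B_{ij}$ follows from
\[
\langle a_{ij},b_{ij}\rangle=c\|u(\theta^-)\|^2-c\|u(\theta^+)\|^2=0 .
\]
So only the four mixed relations remain. Each of them involves only one planar summand:
\[
\langle p_r,a_{ij}\rangle=\cos\bigl(\theta^-_{ij}-\pi/2-2\pi r/n\bigr),\qquad \langle p_r,b_{ij}\rangle=c\,\cos\bigl(\theta^-_{ij}-\pi/2-2\pi r/n\bigr).
\]
Since $c\neq0$, both vanish exactly when $\theta^-_{ij}\equiv 2\pi r/n \pmod \pi$. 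Similarly, $P'_s\perp A_{ij}$ and $P'_s\perp B_{ij}$ are both equivalent to $\theta^+_{ij}\equiv 2\pi s/n\pmod\pi$.

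The next step converts these conditions mod $\pi$ into congruences mod $n$. The conditions read $2\pi(ai-bj)/n\equiv 2\pi r/n\pmod\pi$, i.e. $2(ai-bj-r)/n\in\ZZ$. Because $n$ is odd, this is equivalent to $ai-bj\equiv r\pmod n$; the same oddness argument was used in Lemma~\ref{lem:dual-grid-distinct}. Likewise the second condition becomes $ai+bj\equiv s\pmod n$. This linear system over $\ZZ_n$ has a unique solution, because $2$, $a$ and $b$ are units ($n$ odd, $a,b\in\ZZ_n^\times$). Adding and subtracting the two congruences gives
\[
i=(2a)^{-1}(r+s),\qquad j=(2b)^{-1}(s-r).
\]
This proves existence and uniqueness of $(i,j)$ for each $(r,s)$, and hence that $D_{r,s}$ is an orthogonal context.

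The counting statements follow from bijectivity. The map $(r,s)\mapsto(i,j)$ is a bijection of $\ZZ_n^2$, with inverse $(i,j)\mapsto(ai-bj,\,ai+bj)$. Hence each pair $A_{ij},B_{ij}$ lies in exactly one $D_{r,s}$, and the $n^2$ contexts are pairwise distinct. The rays of $A$ and $B$ are distinct by Lemma~\ref{lem:dual-grid-distinct}, and the points $P_r$ are distinct because the angles $\pi/2+2\pi r/n$ are distinct mod $\pi$ for odd $n$. Therefore a fixed $P_r$ lies in $D_{r,s}$ for exactly the $n$ values of $s$, and symmetrically for $P'_s$.

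I do not expect a real obstacle. The only points that need care are the passage from congruence mod $\pi$ to congruence mod $n$, which uses that $n$ is odd, and matching the sign conventions of $\theta^\pm$ with the indexing of $P_r,P'_s$ so that the stated formulas for $i,j$ come out exactly as written rather than up to sign.
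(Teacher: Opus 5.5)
Your proposal is correct and follows essentially the same route as the paper. Both arguments reduce the four mixed orthogonality conditions to the congruences $ai-bj\equiv r$ and $ai+bj\equiv s \pmod n$, observe that $A_{ij}\perp B_{ij}$ and $P_r\perp P'_s$ hold automatically, solve the system using the invertibility of $2a$ and $2b$, and obtain the multiplicities from the bijectivity of $(r,s)\mapsto(i,j)$. The details you add beyond the paper's terser proof are accurate: the explicit inner products, the passage from congruence mod $\pi$ to congruence mod $n$ using that $n$ is odd, and the distinctness of the $P_r$.
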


\begin{proof}
	From the coordinates of Section~\ref{sec:grids}, the conditions
	$P_r\perp A_{ij}$ and $P'_s\perp A_{ij}$ are equivalent,
	respectively, to
	\[
	ai-bj\equiv r\pmod n,
	\qquad
	ai+bj\equiv s\pmod n.
	\]
	The same congruences characterize the orthogonality of $B_{ij}$ to
	$P_r$ and $P'_s$, since the corresponding planar components have
	the same angular arguments.
	
	Since $n$ is odd and $a,b$ are units modulo $n$, both $2a$ and $2b$
	are invertible in $\ZZ_n$. Adding and subtracting the congruences
	gives the unique solution
	\[
	i=(2a)^{-1}(r+s),
	\qquad
	j=(2b)^{-1}(s-r).
	\]
	For this pair, $A_{ij}\perp B_{ij}$ directly from their defining
	coordinates, while $P_r\perp P'_s$ because $L_+$ and $L_-$ are
	mutually polar. Hence the four rays form an orthogonal context.
	
	As $(r,s)$ ranges over $\ZZ_n^2$, the displayed linear
	transformation between $(r,s)$ and $(i,j)$ is bijective. Thus every
	grid ray occurs in exactly one of these contexts, while fixing $r$
	and varying $s$ gives the $n$ contexts containing $P_r$, and
	similarly for $P'_s$.
\end{proof}

The canonical families $\{C_{ij}\}$ and $\{D_{r,s}\}$ need not exhaust the full orthogonality hypergraph of $\widehat Z_n$, which may depend on the Euclidean realization. Examples and computational data illustrating this dependence are recorded in
Section~\ref{app:completed-orthogonality} of the supplementary
material.

\begin{proposition}
	\label{prop:completed-incidence}
	Let
	$\mathcal C=\{C_{ij}\}_{(i,j)\in\ZZ_n^2}$ be the canonical contexts
	of $Z_{n,n}$ and let
	$\mathcal D=\{D_{r,s}\}_{(r,s)\in\ZZ_n^2}$ be the additional
	contexts of Theorem~\ref{thm:completion-contexts}. Then
	$\widehat Z_n$ consists of $2n(n+1)$ rays and
	$\mathcal C\sqcup\mathcal D$ consists of $2n^2$ distinct orthogonal
	contexts. With respect to this canonical family, every grid ray
	$A_{ij}$ and $B_{ij}$ occurs in exactly three contexts, whereas every
	completion ray $X_r$ and $X'_s$ occurs in exactly $n$ contexts.
\end{proposition}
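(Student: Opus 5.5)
The plan is to assemble the count from the results already established, adding only the facts that are not yet written down. The ray count comes first. By Lemma~\ref{lem:dual-grid-distinct}, $Z_{n,n}$ consists of $2n^2$ distinct rays. The points $P_r$ are pairwise distinct because $n$ is odd: two angles $\pi/2+2\pi r/n$ and $\pi/2+2\pi r'/n$ differ by a multiple of $\pi$ only if $r=r'$. The same argument applies to the points $P'_s$. Moreover, $X_n\subset L_+$ and $X'_n\subset L_-$ are disjoint, since $L_+\cap L_-=\varnothing$. Finally, no completion ray lies in $Z_{n,n}$. Every $A_{ij}$ and $B_{ij}$ has both planar components nonzero, because $c\neq0$, whereas the points of $L_\pm$ have one component equal to zero. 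This gives $2n^2+2n=2n(n+1)$ rays.

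Next I would check that the contexts are distinct. The $n^2$ contexts $C_{ij}$ are pairwise distinct by Lemma~\ref{lem:dual-grid-incidence}. The $n^2$ contexts $D_{r,s}$ are also pairwise distinct, since $D_{r,s}$ contains $P_r$ and $P'_s$ and these determine $(r,s)$. No $C_{ij}$ equals any $D_{r,s}$: every $C_{ij}$ lies inside $Z_{n,n}$, while every $D_{r,s}$ contains the completion ray $P_r\notin Z_{n,n}$. Hence $\mathcal C\sqcup\mathcal D$ has $2n^2$ distinct members, and each is an orthogonal context by Corollary~\ref{cor:dual-grid-contexts} and Theorem~\ref{thm:completion-contexts}.

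The occurrence counts then follow by adding the contributions of the two families. A grid ray lies in exactly two members of $\mathcal C$ by Lemma~\ref{lem:dual-grid-incidence}. It lies in exactly one member of $\mathcal D$, because the map $(r,s)\mapsto(i,j)$ of Theorem~\ref{thm:completion-contexts} is bijective. That gives three contexts in total. A completion ray lies in no member of $\mathcal C$, since those contexts are supported on $Z_{n,n}$. It lies in exactly $n$ members of $\mathcal D$, one for each value of the other index. As a sanity check, the total incidence count is $4\cdot 2n^2=8n^2$, and the ray-side count is $3\cdot 2n^2+n\cdot 2n=8n^2$, so the two agree.

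The only step that needs more than citing earlier results is showing that the completion rays are disjoint from the grids, and this reduces to the observation that every grid point has nonzero components in both $V_+$ and $V_-$. I therefore do not expect a genuine obstacle: all the substantive work was done in Theorem~\ref{thm:completion-contexts} and Lemma~\ref{lem:dual-grid-incidence}.
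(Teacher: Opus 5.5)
Your proposal is correct and follows essentially the same route as the paper. The paper also takes the ray count from $|Z_{n,n}|=2n^2$ and $|X_n|=|X'_n|=n$, adds the multiplicities from Lemma~\ref{lem:dual-grid-incidence} and Theorem~\ref{thm:completion-contexts}, and separates $\mathcal C$ from $\mathcal D$ because each context of $\mathcal C$ lies in $Z_{n,n}$ while each context of $\mathcal D$ contains completion rays. Your version additionally checks facts the paper leaves implicit: that the $P_r$ are distinct, that the completion rays avoid the grids because both planar components of every $a_{ij},b_{ij}$ are nonzero, and that the $D_{r,s}$ are pairwise distinct.
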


\begin{proof}
	The ray count follows from
	$|Z_{n,n}|=2n^2$ and $|X_n|=|X'_n|=n$.
	The family $\mathcal C$ consists of the $n^2$ contexts of the
	dual-grid construction, in which each grid ray occurs exactly twice.
	By Theorem~\ref{thm:completion-contexts}, the family $\mathcal D$
	also consists of $n^2$ contexts, each grid ray occurs in exactly one
	of them, and each completion ray occurs in exactly $n$.
	
	The two families are disjoint, since every context in $\mathcal C$
	is contained in $Z_{n,n}$, whereas every context in $\mathcal D$
	contains one ray of $X_n$ and one ray of $X'_n$. The asserted
	multiplicities follow.
\end{proof}

\begin{remark}
	\label{rem:incidence-to-orthogonality}
	The passage from $Z_{n,n}$ to $\widehat Z_n$ is defined by projective
	incidence, but it reveals new metric structure. The $2n$ rays forced
	by collinear completeness organize themselves into $n^2$ new
	orthogonal contexts with the original dual grids. Thus the two
	canonical families have different geometric origins: the contexts
	$C_{ij}$ arise from polar pairs of secants of the dual grids, whereas
	the contexts $D_{r,s}$ emerge only after the collinear completion and
	involve the two spectral lines.
\end{remark}

\begin{remark}[Collinear versus orthogonal completion]
	\label{rem:collinear-orthogonal-completion}
	The completion considered here is projective rather than orthogonal.
	The spectral lines are determined by the Euclidean pair $(Q,h)$, but
	once the relevant line arrangement has been identified, the added
	points are forced by collinear incidence. This need not coincide with
	closure under orthogonality. For example, in the $H_4$-compatible
	realization of $Z_{5,5}$, the original $50$ rays are already
	orthogonally closed, whereas the collinear completion adds ten rays
	and yields the projectivized $H_4$ configuration; see
	Section~\ref{app:Z55-closure} of the supplementary material for an
	exact verification.
\end{remark}

For $n=3$ and $n=5$, the completed configurations acquire exceptional
symmetry. We identify these two cases in the next section.

\section{Exceptional completions: $F_4$ and $H_4$}
\label{sec:exceptional}

The first two members of the diagonal family recover two exceptional root configurations after collinear completion.  For $n=3$, the canonical collinear
completion of the Cabello configuration restores the projectivized
$F_4$ root configuration, equivalently the Peres $24$-ray system.
For $n=5$, a distinguished admissible Euclidean realization of the
dual-grid construction completes to the projectivized $H_4$ root
configuration.

These identifications are consequences of the general construction
of Section~\ref{sec:completion}: the root configurations are not used
to define the completion, but emerge as exceptional members of the
family $\{\widehat Z_n\}$.

\subsection{The $F_4$ completion}
\label{subsec:F4}

\begin{theorem}
	\label{thm:F4-completion}
	The completed dual-grid configuration $\widehat Z_3$ is orthogonally
	equivalent to the projectivized $F_4$ root configuration, or
	equivalently to the Peres $24$-ray configuration.
\end{theorem}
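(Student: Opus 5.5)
The plan is to reuse the orthogonal matrix $R$ from the proof of Theorem~\ref{thm:cabello-identification}, which already sends $A$ onto $D\setminus L$ and $B$ onto $D'\setminus L'$. Since $R$ is orthogonal, it preserves incidence and polarity, so it suffices to show two things. First, $R(X_3)$ and $R(X'_3)$ are exactly the removed trisecants $L=\{P_1,P_5,P_{12}\}$ and $L'=\{P_{16},P_{18},P_{24}\}$, possibly with $X_3$ and $X'_3$ interchanged. Second, the Peres configuration $D\sqcup D'$ is the projectivized $F_4$ root configuration. The second point is classical: the $24$ Peres rays are the $48$ roots of $F_4$ up to sign. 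Concretely, $D$ and $D'$ are the long-root and short-root $D_4$ subsystems, each consisting of $12$ rays.

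For the first point I would argue conceptually rather than by substitution. Take $n=3$, $a=b=1$, $c=\sqrt2$. The rays of $X_3$ lie on the spectral line $L_+$ and the rays of $X'_3$ on $L_-$. These two lines are mutually polar, and by Proposition~\ref{prop:polar-secant} the three points $P_r$ are pairwise non-orthogonal, since they sit at angles $\pi/2+2\pi r/3$.

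On the Peres side, Lemma~\ref{lem:D4-minus-trisecant} shows that $D\setminus L$ is a $(3,3)$-grid, and adjoining $L$ restores $D$. In the projectivized $D_4$ configuration, every line through two grid points lying on distinct lines of one ruling and meeting a fixed trisecant is itself a trisecant of $D$. So $L$ plays for $D\setminus L$ exactly the role that $L_+$ (or $L_-$) plays in the uniqueness part of Theorem~\ref{thm:collinear-completion}.

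The argument then runs as follows.
\begin{enumerate}
\item Show that $R(L_+)$ is the line spanned by $L$, and similarly that $R(L_-)$ is the line spanned by $L'$. The spectral lines of $Q_{\sqrt2}$ are intrinsic, being eigenspaces of the self-adjoint operator, and $R$ carries $Q_{\sqrt2}$ to the unique quadric containing the grid $D\setminus L$. It is therefore enough to check that the span of $L$ is an eigenspace of that quadric's operator.
\item Use Lemma~\ref{lem:spectral-transversals}. It locates $X_3$ as the intersections of $L_+$ with the transversals joining grid points on distinct ruling lines. Transport these transversals by $R$; they become lines of the $D_4$ configuration meeting the span of $L$ exactly at the points of $L$. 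Hence $R(X_3)=L$, and likewise $R(X'_3)=L'$.
\item Conclude that $R(\widehat Z_3)=D\sqcup D'$, which is the Peres configuration and hence $F_4^{\mathrm{proj}}$.
\end{enumerate}

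The main obstacle is step 1, identifying the spectral line with the span of the removed trisecant. I would first try the cheap route: apply $R$ to the explicit vectors of $L_+=\PP(\RR^2\oplus0)$, namely the first two basis vectors, and compare with the Peres coordinates of $P_1,P_5,P_{12}$ in the supplementary material. This is a finite exact computation. Better still, apply $R$ directly to the three representatives $(u(\pi/2+2\pi r/3),0)$ and read off the Peres rays; this proves $R(X_3)=L$ at once and makes step 2 a consistency check.

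As a safeguard, I would also confirm that the $D_{r,s}$ contexts of Theorem~\ref{thm:completion-contexts} map to Peres bases, since this is a necessary condition.
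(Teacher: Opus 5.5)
Your proposal is correct and follows the paper's route. You transport everything by the matrix $R$ of Theorem~\ref{thm:cabello-identification}, identify the two completion orbits with the deleted trisecants $L$ and $L'$, and recognize $D\sqcup D'$ as $F_4^{\mathrm{proj}}$. Where the paper gets the middle step by citing Theorem~\ref{thm:collinear-completion}, it tacitly uses that $R$ carries the spectral lines onto the spans of $L$ and $L'$. Your direct substitution settles that step outright: $R$ sends $(u(\pi/2+2\pi r/3),0)$, $r=0,1,2$, to the Peres rays $P_1,P_5,P_{12}$ respectively, and $(0,u(\pi/2+2\pi s/3))$, $s=0,1,2$, to $P_{16},P_{18},P_{24}$, so no interchange of $X_3$ and $X'_3$ is needed.
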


\begin{proof}
	By Theorem~\ref{thm:cabello-identification}, the two grids of
	$Z_{3,3}$ are orthogonally equivalent to the complements
	$D\setminus L$ and $D'\setminus L'$ of a pair of mutually polar
	trisecants in the two projectivized $D_4$ components of the Peres
	configuration.
	
	By Theorem~\ref{thm:collinear-completion}, the three points on each
	trisecant are precisely the canonical collinear completion of the
	corresponding $(3,3)$-grid. Adjoining the two completion orbits
	therefore restores $D\sqcup D'$, which is the projectivized $F_4$
	root configuration.
\end{proof}

Thus the passage from the Cabello $18$--$9$ configuration to the
Peres $24$-ray configuration is intrinsic to the grid geometry: the
six additional rays are forced by the collinear completion of the two
$(3,3)$-grid components.

\begin{remark}
	\label{rem:sixteen-completions}
	The choice of the pair of trisecants is not essential. Each
	projectivized $D_4$ component contains $16$ trisecants, paired across
	the two components by Euclidean polarity. The corresponding
	trisecant deletions therefore give equivalent realizations of the
	same grid-completion picture. Explicit coordinates and verification
	for one such pair are recorded in 
	Section~\ref{app:cabello} of the Supplementary Material.
\end{remark}

\subsection{The $H_4$ completion}
\label{subsec:H4}

For $n=5$, the completed configuration has
$2\cdot5^2+2\cdot5=60$ rays, the same number as the projectivized
$H_4$ root configuration. For a distinguished admissible Euclidean
realization, the agreement extends to an orthogonal equivalence.

Let $\phi=(1+\sqrt5)/2$. Consider the realization determined by
$a=1$ and $b=2$. Then
\[
\alpha=\frac{2\pi}{5},\qquad
\beta=\frac{4\pi}{5},\qquad
c^2=\phi^{-2}.
\]

\begin{theorem}
	\label{thm:H4-completion}
	For the admissible realization of $Z_{5,5}$ determined by
	$a=1$ and $b=2$, the canonical collinear completion
	$\widehat Z_5$ is orthogonally equivalent to the projectivized
	$H_4$ root configuration.
\end{theorem}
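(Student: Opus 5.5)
The plan is to prove Theorem~\ref{thm:H4-completion} through the quaternionic model of $H_4$. Instead of matching $60$ coordinate vectors by brute force, I would show that both configurations are the orbit decomposition of the same finite group action. Identify $V=\RR^4$ with the quaternions $\mathbb H=\CC\oplus\CC j$, with $h$ the standard norm form. The $120$ roots of $H_4$ are then the binary icosahedral group $2I$ of unit icosians, and $H_4^{\mathrm{proj}}=2I/\{\pm1\}$. Choose an element $\zeta\in 2I$ of order $10$, and rotate coordinates so that $\zeta=e^{\pi i/5}\in\CC$. Left and right multiplication by powers of $\zeta$ act on $\CC\oplus\CC j$ as a pair of planar rotations, because $\zeta^k(z+wj)\zeta^l=\zeta^{k+l}z+\zeta^{k-l}wj$. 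They therefore generate, modulo $\pm1$, an action of $\ZZ_5\times\ZZ_5$ of exactly the form $R_{\alpha-\beta}\oplus R_{\alpha+\beta}$ used in Section~\ref{sec:grids}, with $\alpha=2\pi/5$ and $\beta=4\pi/5$ after a suitable choice of generators. This action preserves $2I$, so $H_4^{\mathrm{proj}}$ is a union of its orbits.

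Next I would decompose $2I/\{\pm1\}$ into orbits under this $\ZZ_5\times\ZZ_5$. The five rays of $\langle\zeta\rangle/\{\pm1\}$ lie on $\PP(\CC)=L_+$ and form one orbit. The normalizer of $\langle\zeta\rangle$ in $2I$ is binary dihedral of order $20$, so it supplies elements of the form $wj$. Their rays form a second five-point orbit on $\PP(\CC j)=L_-$. The remaining $50$ rays are the elements $z+wj$ with $zw\neq0$. I would check that for these the ratio $|w|/|z|$ takes only the two values $\phi^{-1}$ and $\phi$, using the explicit icosian coordinates. Each value class is then a single free orbit of size $25$, and the count $60=5+5+25+25$ confirms this. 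Since $c^2=\phi^{-2}$, the class with $|w|/|z|=\phi^{-1}$ lies on $Q_c$ and the class with ratio $\phi$ lies on $\dual{Q_c}$, which matches Proposition~\ref{prop:dual-grid-quadrics}.

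It then remains to show that an orthogonal map (a rotation in each plane $\CC$ and $\CC j$, possibly with a reflection) carries $A$, $B$, $X_5$ and $X'_5$ onto these four orbits, up to reindexing. Because the action is equivariant, it suffices to match one base point of each orbit: $A_{00}\propto(1,0,-c,0)$, $B_{00}\propto(c,0,1,0)$, $P_0$, and $P'_0$. For the two five-point orbits no further work is needed once the first two are matched, since Theorem~\ref{thm:collinear-completion} says $X_5$ and $X'_5$ are the unique collinear completions on $L_\pm$. Any orthogonal map preserving the grids and the spectral lines therefore carries them onto the corresponding five-point orbits. The same fact gives an independent check that the two five-point root orbits on $L_\pm$ are exactly the points forced by the transversals.

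The main obstacle I expect is the phase bookkeeping in the free orbits. The two planar phases of $A_{00}$, namely $\arg z$ and the phase of the $w$-component, must both be congruent, modulo the orbit, to those of some root in the correct class. The same must hold for $B_{00}$ using the same rotation. In other words, the relative angular offset between the $u$- and $v$-components fixed by the construction, including the sign $-c$ in $a_{ij}$, must agree with the offset in $2I$. I would settle this by writing down explicitly the roots $z+wj$ with $|w|/|z|=\phi^{-1}$, for instance from the even permutations of $\tfrac12(0,\pm1,\pm\phi,\pm\phi^{-1})$ rewritten in the adapted frame. I would then verify that their phase pairs $(\arg z,\arg w)$ form the lattice $\{(\theta^-_{ij},\theta^+_{ij}+\pi)\}$ up to a common shift, and do the same for the other class. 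If the shifts for the two classes differ by a non-removable amount, I would use the freedom in the choice of $\zeta$ (or of $b=\pm2$) to absorb it. As a final consistency check, the orthogonality $A_{ij}\perp B_{ij}$ together with the contexts $D_{r,s}$ of Theorem~\ref{thm:completion-contexts} must correspond to orthogonal bases inside $H_4^{\mathrm{proj}}$.
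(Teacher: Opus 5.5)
Your route is genuinely different from the paper's. The paper takes the $H_4$ points in explicit coordinates, shows the two grid quadrics have matrices $M_1$ and $M_1^{-1}$ with eigenvalues $\phi^{-1}$ and $-\phi$, recognizes the spectral lines as $\ell_{17},\ell_{24}$, and then checks an explicit $R\in O(4)$ ray by ray by exact computation.

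Your orbit analysis is correct and more explanatory:
\begin{itemize}
\item left multiplication by $\zeta^2$ and right multiplication by $\zeta^{-4}$ realize the grid shifts $(1,0)$ and $(0,1)$;
\item for a root $z+wj$ with $zw\neq0$, the quantity $|z|^2-|w|^2=\pm1/\sqrt5$ is the cosine of the angle between two distinct non-antipodal $5$-fold axes, which gives $|w|/|z|=\phi^{\mp1}$ and $60=5+5+25+25$.
\end{itemize}

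\textbf{The gap is in the five-point orbits.} Theorem~\ref{thm:collinear-completion}, applied to $T(A)$, only says that $T(X_5)$ is the minimal collinear completion of $T(A)$ on $\PP(\CC)$. It says nothing about the roots $\langle\zeta\rangle/\{\pm1\}$. The group acts on $\PP(\CC)$ by rotations through multiples of $2\pi/5$, so it has a one-parameter family of five-point orbits there. $T(X_5)$ and the root orbit are two members of that family, and their coincidence is an additional discrete condition. Your ``independent check'' presupposes this rather than proving it.

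The relative offset of the two free orbits is the same kind of condition, and you defer it to a computation. That computation does come out right, but your fallback could not have rescued it. The offset modulo the orbit lattice $\Lambda=\{(2\pi m/5,2\pi n/5)\}$ is unaffected by:
\begin{itemize}
\item rotations and reflections of the two planes;
\item conjugating $\zeta$ in $2I$;
\item $b\mapsto-b$, which replaces $c$ by $c^{-1}$ and only reflects the realization in $V_-$ while swapping $A$ and $B$.
\end{itemize}

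\textbf{How to close both points.} An orthogonality count does it without coordinates.

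On the grid side:
\begin{itemize}
\item the phase pairs $(\arg u,\arg v)$ of $A$ and $B$ form $\Lambda+(0,\pi)$ and $\Lambda$, respectively;
\item $A_{ij}\perp_h B_{kl}$ if and only if $i=k$ or $j=l$;
\item the congruences in the proof of Theorem~\ref{thm:completion-contexts} give $X_5=\{L_+\cap x^{\perp_h}:x\in A\}$ and $X'_5=\{L_-\cap x^{\perp_h}:x\in A\}$.
\end{itemize}

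On the $H_4$ side, every ray is orthogonal to exactly $15$ others, corresponding to the $30$ elements of order $4$ in $2I$. Choose $T$ with $T(A)$ equal to the class $|w|/|z|=\phi^{-1}$. A ray $x$ in this class is orthogonal to:
\begin{itemize}
\item exactly $4$ rays of its own class (this depends only on $\Lambda$ and $c^2=\phi^{-2}$);
\item at most one root on each of $\PP(\CC)$ and $\PP(\CC j)$;
\item $0$, $5$ or $9$ rays of the other class, with $9$ occurring exactly when the offset between the two classes is $(0,\pi)$ modulo $\Lambda$.
\end{itemize}
Hence $15=4+9+1+1$ is forced. This fixes the offset of $B$, so $T(B)$ is the other class. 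It also forces $x^{\perp_h}\cap\PP(\CC)$ and $x^{\perp_h}\cap\PP(\CC j)$ to be roots, so $T(X_5)$ and $T(X'_5)$ are the two five-point root orbits.

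With this step added, your argument is a complete alternative. It explains why $c^2=\phi^{-2}$ and why the completion points are roots. The paper's computation instead gives an explicit ray-by-ray correspondence with the standard numbering of $H_4^{\mathrm{proj}}$.
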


\begin{proof}
	We use the coordinates for the projectivized $H_4$ configuration
	from~\cite{harbourne2021unexpected}, with the numbering adopted
	by Wi\'sniewska and Zi\k{e}ba~\cite{WisniewskaZiebaH4}. Its $60$ points split into
	two sets of $30$ points, each consisting of a $(5,5)$-grid together
	with five collinear points.
	
	Let $Q_1$ and $Q_2$ be the quadrics supporting the two grids. In
	the coordinates of \cite{WisniewskaZiebaH4}, the symmetric matrix
	of $Q_1$ is
	\[
	M_1=
	\begin{pmatrix}
		0&1&0&0\\
		1&-1&0&0\\
		0&0&\phi-1&0\\
		0&0&0&-\phi
	\end{pmatrix},
	\]
	and the matrix of $Q_2$ is $M_1^{-1}$. Thus $Q_1$ and $Q_2$ are
	dual with respect to Euclidean polarity.
	
	The eigenvalues of $M_1$ are $\phi^{-1}$ and $-\phi$, each with
	multiplicity two. After rescaling the equation and applying an
	orthogonal change of coordinates, $Q_1$ is therefore of the form
	$Q_c$ with $c^2=\phi^{-2}$, exactly the spectral parameter of the
	dual-grid realization above. The corresponding eigenspaces are
	\[
	E_{\phi^{-1}}
	=\langle(1,\phi-1,0,0),(0,0,1,0)\rangle,
	\qquad
	E_{-\phi}
	=\langle(1,-\phi,0,0),(0,0,0,1)\rangle.
	\]
	In the notation of \cite{WisniewskaZiebaH4}, their projectivizations
	are the two five-point lines $\ell_{17}$ and $\ell_{24}$,
	respectively. Thus these are precisely the spectral lines of $Q_1$.
	
	Set 
	$d=\sqrt{1+\phi^{-2}}=\sqrt{3-\phi}$ 
	and consider
	\[
	R=
	\begin{pmatrix}
		d^{-1}&0&-\phi^{-1}d^{-1}&0\\
		\phi^{-1}d^{-1}&0&d^{-1}&0\\
		0&1&0&0\\
		0&0&0&-1
	\end{pmatrix}.
	\]
	Its columns form an orthonormal basis, so $R\in O(4)$.
	
	Let $G_1\subset Q_1$ and $G_2\subset Q_2$ denote the two
	$25$-point grids in the projectivized $H_4$ configuration. Exact
	calculation, up to nonzero scalar multiples, gives
	\[
	R(\{A_{ij}\})=G_1,\quad
	R(\{B_{ij}\})=G_2,
	\quad\text{and}\quad
	R(X_5)=\ell_{17}\cap H_4^{\mathrm{proj}},
	\quad
	R(X'_5)=\ell_{24}\cap H_4^{\mathrm{proj}}.
	\]
	The complete ray correspondence and its exact verification are
	recorded in Section~\ref{app:H4-verification} of the  Supplementary Material.
	
	Hence $R$ maps the $50$ rays of $Z_{5,5}$ onto the two grids and
	the ten completion rays onto the remaining ten points. Therefore
	$R(\widehat Z_5)=H_4^{\mathrm{proj}},$
	which proves the claim.
\end{proof}

\begin{remark}
	\label{rem:H4-parameters}
	The identification with $H_4$ depends on the Euclidean realization
	of the dual-grid configuration, not only on its projective incidence
	structure. Different admissible choices of the cyclic steps may give
	the same dual-grid incidence pattern while producing different full
	orthogonality hypergraphs; see
	Section~\ref{app:completed-orthogonality} of the supplementary
	material.
	
	The spectral value $c^2=\phi^{-2}$ selects the realization
	$a=1$, $b=2$ as a natural candidate for the $H_4$ identification,
	but spectral agreement alone does not identify the two cyclic
	$(5,5)$-grids. The orthogonal equivalence is established by the
	explicit transformation $R$ above; Section~\ref{app:H4-verification} of the Supplementary Material
	records an exact verification of the complete ray correspondence.
\end{remark}

The two exceptional cases therefore fit the same pattern:
\[
Z_{3,3}\subset\widehat Z_3\simeq_{\mathrm{orth}} F_4^{\mathrm{proj}},
\qquad
Z_{5,5}\subset\widehat Z_5\simeq_{\mathrm{orth}} H_4^{\mathrm{proj}}.
\]
The root configurations are not inputs to the construction; rather,
they emerge from the canonical completion of the first two members of
the diagonal family.

\section{Discussion and further directions}
\label{sec:discussion}

The constructions developed in this paper bring together three
structures that are often considered separately: projective
incidence, Euclidean orthogonality, and Kochen--Specker
contextuality. The dual-grid construction shows that, in dimension
four, a substantial part of the orthogonality structure of a
Kochen--Specker configuration may arise from comparatively simple
projective geometry on pairs of ruled quadrics.

The purpose of this final section is to summarize the interaction
between these structures, to clarify which features are canonical
and which depend on the Euclidean realization, and to indicate some
questions suggested by the dual-grid viewpoint.

\subsection{Incidence, polarity, and contextuality}

The first family is built from two finite grids on a pair of smooth
quadrics dual with respect to the Euclidean polarity. Its basic
mechanism may be summarized schematically as
\[
\text{dual quadrics}
\longrightarrow
\text{polar secants}
\longrightarrow
\mathcal C.
\]
The rulings of the quadrics provide the projective incidence
structure, while the polarity converts suitable pairs of secants into
orthogonality relations. For odd $p,q\geq3$, the resulting
configuration $Z_{p,q}$ consists of $2pq$ rays and carries $pq$
distinguished contexts, with every ray occurring exactly twice.
The parity obstruction is therefore already present at the level of
the dual grids. The completed diagonal case carries a second mechanism:
\[
\text{grid incidence}
\longrightarrow
\text{collinear completion}
\longrightarrow
\text{spectral polarity}
\longrightarrow
\mathcal D.
\]
For $Z_{n,n}$, collinear completion forces the addition of $n$ points
on each of two distinguished lines. The resulting
configuration
\[
Z_{n,n}\subset\widehat Z_n,
\qquad
|\widehat Z_n|=2n(n+1),
\]
has the additional Euclidean feature that the two completion lines
are precisely the spectral lines of the supporting quadrics and are
mutually polar. This interaction between the projective completion
and the Euclidean polarity produces the second canonical family
$\mathcal D$ of $n^2$ orthogonal contexts.

The two mechanisms should be kept conceptually distinct. The
Kochen--Specker obstruction belongs already to $Z_{n,n}$, before the
additional $2n$ rays are introduced. The role of the completion is
different: it reveals further projective incidence and, through its
interaction with the Euclidean polarity, further orthogonality.

The exceptional configurations $F_4$ and $H_4$ fit naturally into
this picture. The identifications
\[
\widehat Z_3\simeq_{\mathrm{orth}} F_4^{\mathrm{proj}},
\qquad
\widehat Z_5\simeq_{\mathrm{orth}} H_4^{\mathrm{proj}}
\]
show that the first two exceptional completions coincide, up to orthogonal equivalence, with the projectivizations of the finite root systems $F_4$ and $H_4$, respectively. The root systems are therefore not needed to define the construction: rather, they emerge
as exceptional members of an infinite geometric family.

\subsection{Canonical contexts and the full orthogonality hypergraph}

The construction singles out two canonical families of orthogonal
contexts. The first family $\mathcal C$ is present for every
admissible dual-grid configuration and is determined by the polar
secants of the two grids. In the completed diagonal case, the
interaction between the completion lines and Euclidean polarity
produces the second family $\mathcal D$.

These canonical contexts need not exhaust the orthogonality
relations of a particular Euclidean realization. It is therefore
useful to distinguish the incidence structure produced by the
construction from the \emph{full orthogonality hypergraph}, whose
vertices are the rays of the configuration and whose hyperedges are
all orthogonal bases contained in it.

This distinction is already visible in the smallest cases. Exact
enumeration shows that both the completed $n=3$ configuration,
identified with $F_4^{\mathrm{proj}}$, and the $H_4$-compatible
$n=5$ realization contain orthogonal bases beyond those distinguished
by the two canonical families. Exact enumerations are recorded in
Section~\ref{app:completed-orthogonality} of the Supplementary
Material.

More generally, the full orthogonality hypergraph is a metric
feature of the chosen Euclidean realization and need not be
determined by the underlying dual-grid incidence structure. Exact
enumerations in the first cases show that different admissible
choices of the cyclic steps may produce different numbers of
orthogonal bases; see
Section~\ref{app:completed-orthogonality} of the Supplementary
Material.

\begin{remark}
	\label{rem:full-orthogonality}
	The distinction between the canonical context structure and the full
	orthogonality hypergraph is not merely formal. For fixed $p$ and $q$,
	the canonical families supplied by the construction have the same
	cardinalities and incidence pattern for every admissible choice of
	cyclic steps, whereas the full set of orthogonal bases may depend on
	the resulting Euclidean realization. Thus the projective dual-grid
	geometry determines a canonical orthogonality structure, while
	special metric realizations may exhibit additional orthogonality.
	
	This does not affect the parity proof. The canonical family
	$\mathcal C$ already forms a parity-proof subhypergraph: it contains
	an odd number of contexts and every ray occurs in it exactly twice.
	Additional orthogonal bases enlarge the ambient orthogonality
	hypergraph without invalidating this obstruction.
\end{remark}

From the physical point of view, the role of these additional
contexts is less clear. They provide additional compatible
measurement bases on the same set of rays, but need not participate
in the parity proof used in this paper.  

\begin{question}
	For a fixed dual-grid incidence structure, what is the physical
	significance of varying the Euclidean realization and hence the full
	orthogonality hypergraph? In particular, does the presence of
	additional orthogonal contexts lead to stronger or more robust
	certificates of contextuality?
\end{question}

\subsection{The converse at the minimum}

The smallest member of the construction is particularly suggestive.
For $p=q=3$, the dual-grid configuration $Z_{3,3}$ has $18$ rays
and $9$ canonical contexts and is orthogonally equivalent to the
Cabello configuration. This is the minimum possible number of rays
for a Kochen--Specker configuration in real dimension four.

From the present point of view, the Cabello configuration carries
considerably more geometry than is required by its usual
description as an $18$-ray parity proof: its rays split into two
$(3,3)$-grids on a pair of smooth Euclidean-dual quadrics, and its
nine contexts arise from pairs of polar secants.

The metric freedom of the dual-grid
	construction collapses. Indeed, the only units modulo $3$ are
	$1$ and $2$, and the admissible choices of cyclic steps yield
	$c^2=2$ or $c^2=1/2$.  Thus, up to orthogonal
	equivalence, the dual-grid construction has a unique Euclidean
	realization in the $(3,3)$ case.

This raises a natural rigidity question. Is this additional
projective geometry a special feature of the known minimal
configuration, or is it forced by minimality itself?

\begin{question}
	Is every $18$-ray Kochen--Specker configuration in
	$\PP^3(\RR)$ projectively equivalent to a union of two
	$(3,3)$-grids on smooth quadrics? If so, can the quadrics always be
	chosen to be dual with respect to a Euclidean polarity for which
	the nine contexts arise from the polar-secant construction?
\end{question}
The first part is purely projective, whereas the second asks whether
the Euclidean polarity responsible for contextuality is itself
forced by the minimal orthogonality structure.

A positive answer would give a geometric characterization of
minimal Kochen--Specker configurations in real dimension four.
Even a weaker statement---for example, that every minimal
configuration admits some distinguished pair of ruled quadrics or
some comparable projective decomposition---would indicate that the
geometry exhibited here reflects rigidity at the minimum rather
than an accidental feature of one realization.

\subsection{Towards a converse}
\label{subsec:converse}

The results of this paper start from a pair of grids on
Euclidean-dual quadrics and produce Kochen--Specker configurations.
It is natural to ask to what extent the geometry can be recovered in
the opposite direction. We do not know a converse to our
construction, but the problem can be formulated intrinsically in
projective terms.

Let $Z\subset\PP^3(\RR)$ be a real Kochen--Specker configuration.
Each context is the projectivization of an orthogonal basis of
$\RR^4$, and hence determines a self-polar tetrahedron with respect
to the Euclidean polarity. In particular, every decomposition of a
context into two pairs,
\[
\{P_1,P_2,P_3,P_4\}
=
\{P_1,P_2\}\sqcup\{P_3,P_4\},
\]
gives a pair of polar lines $\langle P_1,P_2\rangle^{\perp_h}
=
\langle P_3,P_4\rangle.$

Thus polar pairs of secants are already present locally in every
four-dimensional KS configuration. What is not automatic is that
these secants organize globally into the rulings of two quadrics.

There is another elementary mechanism by which orthogonality can
force projective incidence. If three distinct points
$R_1,R_2,R_3\in Z$ are orthogonal to two distinct points
$P,Q\in Z$, then
\[
R_1,R_2,R_3\in
\PP\bigl(\langle P,Q\rangle^{\perp_h}\bigr),
\]
and hence $R_1,R_2,R_3$ are collinear. Equivalently, a $K_{3,3}$
in the orthogonality graph of $Z$ determines a pair of polar
trisecants. More generally, sufficiently large families of common
orthogonals produce multisecants without any collinearity assumption
being imposed in advance.

This identifies a first obstruction to a converse. The
Kochen--Specker condition is a global non-colorability condition on
the hypergraph of contexts, whereas the existence of grids requires
specific incidence relations among the corresponding projective
points. In particular, the KS condition alone does not visibly force
a $K_{3,3}$ in the orthogonality graph. Establishing such a
statement under additional hypotheses, or finding an obstruction to
it, would already provide a first bridge from contextuality to
projective incidence.

Even the existence of polar trisecants would not yet recover the
geometry considered here. A $(p,q)$-grid requires two families of
multisecants forming the two rulings of a smooth quadric. For
instance, three skew lines together with three common transversals
determine a smooth quadric and a $(3,3)$-subgrid. Under Euclidean
polarity the corresponding configuration is carried to the dual
quadric. Thus, once a sufficiently rich system of polar
multisecants is forced, the passage from orthogonality to a pair of
dual ruled quadrics becomes projectively natural. This suggests separating a possible converse into three steps:
\[
\text{KS non-colorability}
\ \Longrightarrow\
\text{polar multisecants}
\ \Longrightarrow\
\text{two ruled quadrics}
\ \Longrightarrow\
\text{dual grids}.
\]
Only the local ingredients of this chain are automatic; the global
implications remain unclear.

For parity configurations there is additional combinatorial
structure. If every ray occurs exactly twice, the contexts may be
regarded as the vertices of a $4$-regular graph and the rays as its
edges. Since every finite $4$-regular graph admits a $2$-factorization,
choosing one separates the rays
into two classes such that every context contains two rays of each
class. Consequently every context becomes a pair of polar secants,
one in each class. Such a factorization is combinatorial rather than
geometric and need not be unique; by itself it does not force either
class to lie on a quadric, nor does it force the resulting secants to
form rulings. This appears to isolate precisely the additional
geometric rigidity present in the configurations constructed here.

It is therefore natural to ask whether dual-grid geometry
characterizes a distinguished class of real four-dimensional
contextual configurations.

\begin{question}
	Which real Kochen--Specker configurations in $\PP^3(\RR)$ contain
	finite grids on a pair of Euclidean-dual smooth quadrics? Is there a
	natural combinatorial or orthogonality-theoretic criterion
	characterizing this class?
\end{question}

\begin{question}
	Does every real parity Kochen--Specker configuration in
	$\PP^3(\RR)$ contain a subconfiguration orthogonally equivalent to
	some $Z_{p,q}$? More generally, under what hypotheses does a parity
	configuration contain a dual-grid subconfiguration?
\end{question}

\subsection{A quaternionic viewpoint}
\label{subsec:quaternionic-viewpoint}

We now give a complementary interpretation of the normal form
used throughout the paper. Fix an isometric identification $\Psi:\RR^4\to\mathbb H$. After a suitable ordering of coordinates,
the grid vectors of Section~\ref{sec:grids} may be written as
\[
\Psi(a_{ij})
=
e^{\mathbf i i\alpha}(-c+\mathbf j)e^{\mathbf i j\beta},
\qquad
\Psi(b_{ij})
=
e^{\mathbf i i\alpha}(1+c\mathbf j)e^{\mathbf i j\beta}.
\]
Thus the two grids are finite orbits of the maximal torus
\[
U(1)\times U(1)
\subset Sp(1)\times Sp(1)
\simeq \operatorname{Spin}(4),
\]
acting on $\mathbb H$ by left and right multiplication. In this
description the two angular parameters encode the two circle
actions, whereas $c$ records the relative sizes of the components
of the seed quaternion in the two invariant real planes. This
interpretation is available for the whole dual-grid family and does
not depend on the exceptional behavior of the first diagonal cases.

For $n=3$ and $n=5$, however, the completed configurations meet
finite quaternionic geometry in a particularly rigid way. The
projectivized $F_4$ configuration may be represented using the
binary octahedral group $2O\subset Sp(1)$. In the realization of
Theorem~\ref{thm:F4-completion}, the roots above each completion
trisecant form a coset of a cyclic subgroup of order $6$; after
projectivization, the three completion rays form a cyclic orbit of
order $3$.

Similarly, the roots of $H_4$ may be identified with the binary
icosahedral group $2I\subset Sp(1)$. Let $L$ be one of the
five-point completion lines occurring in
Theorem~\ref{thm:H4-completion}, and choose a root $q_0\in2I$
representing one of its points. Right multiplication by $q_0^{-1}$
sends the two-dimensional real subspace above $L$ to a plane
\[
W=\langle1,u\rangle_{\RR}\subset\mathbb H
\]
for some purely imaginary unit quaternion $u$. Since
$W\cap Sp(1)$ is a circle subgroup, the finite group $2I\cap W$ is
cyclic. The five projective points of $L$ lift to ten antipodal
roots.This suggests viewing the roots above $L$ as a coset of a cyclic subgroup
\[
\mathbb Z_{10}\subset 2I,
\]
or, projectively, the five completion rays as a cyclic orbit of order $5$.
It follows that the roots above $L$ form a coset of a cyclic
subgroup of order $10$, or, projectively, a cyclic orbit of order
$5$. Since the two completion lines are mutually polar, their underlying
two-dimensional vector subspaces are orthogonal. From this viewpoint,
the twenty roots above them naturally form an $H_2\oplus H_2$
subsystem of $H_4$.

The two exceptional completions thus exhibit the parallel
quaternionic pattern
\[
\mathbb Z_6\subset2O
\quad\text{for }n=3,
\qquad
\mathbb Z_{10}\subset2I
\quad\text{for }n=5,
\]
or, after projectivization, cyclic completion orbits of orders $3$
and $5$.

The first case beyond these two exceptional completions is $n=7$.
Here the quaternionic torus description remains unchanged, and for
each admissible realization the canonical collinear completion still
produces two cyclic $7$-point orbits on the spectral lines. What
fails is the further identification with one of the exceptional
binary polyhedral groups. An identification of the same kind would would require an
element of order $14$, whereas no such element occurs in the binary
tetrahedral, octahedral, or icosahedral groups $2T$, $2O$, or $2I$.
Thus the toric dual-grid construction and its spectral completion
persist for $n=7$, while the exceptional finite quaternionic
realizations underlying the $F_4$ and $H_4$ cases do not.

\subsection{Further questions}
\label{subsec:further-questions}

The first case beyond the exceptional completions provides a natural
testing ground for separating the structures encountered in this
paper. For $n=3$ and $n=5$, the completed configurations are
identified with the projectivized $F_4$ and $H_4$ root
configurations and are geproci. By contrast, the realization
$\widehat Z_7(1,2)$, which maximizes the number of orthogonal bases
among the admissible cyclic realizations considered here, is not
geproci; an exact verification is recorded in
Section~\ref{app:Z77-geproci} of the Supplementary Material.

Thus the dual-grid construction, the spectral lines, the canonical
collinear completion, and a rich orthogonality structure all persist
for $n=7$, while the exceptional Coxeter interpretation and the
geproci property do not. This suggests that the additional structures
coinciding in the first two cases should be regarded, a priori, as
distinct phenomena.

\begin{question}
	For which odd $n$ and admissible pairs $(a,b)$ is the completed
	configuration $\widehat Z_n(a,b)$ geproci? Is the geproci property
	constant among the different Euclidean realizations for fixed $n$?
\end{question}

The possible relation between geproci geometry and contextuality
extends beyond the present family. In particular, the Penrose
configuration \cite{ZP} provides another natural testing ground. It would be
interesting to determine whether the occurrence of geproci
configurations in examples arising from contextuality reflects a
genuine geometric mechanism, or whether the two properties are
essentially independent.

A second direction concerns the additional orthogonality that may
appear in special Euclidean realizations. The computations in the
first cases suggest that the number of orthogonal bases can vary with
the admissible cyclic steps, even though the canonical context
structure remains unchanged.

\begin{question}
	For fixed odd $n$, how does the full orthogonality hypergraph of
	$\widehat Z_n(a,b)$ depend on the admissible cyclic steps $(a,b)$?
	Which choices maximize the number of orthogonal bases, and do the
	maximizing realizations admit an intrinsic geometric
	characterization? Is such maximality related to orthogonal closure?
\end{question}

This problem is naturally connected with symmetry. Additional
orthogonal bases may reflect an enhancement of the symmetry of the
Euclidean realization rather than merely an accidental increase in
the number of contexts. It would therefore be useful to compare the
projective, orthogonal, and hypergraph automorphism groups of the
completed configurations and to understand how these groups vary
with the realization. In particular, one may ask whether the
realizations maximizing the number of orthogonal bases are also
distinguished by an enhanced automorphism group.

The cyclic symmetry built into the construction is another feature
that may not be essential. The parity argument ultimately depends on
the incidence pattern of the distinguished contexts and on the
polar-secant mechanism, rather than on cyclic symmetry itself.

\begin{question}
	Can the dual-grid construction be extended to non-cyclic finite
	subsets of pairs of Euclidean-dual quadrics? More generally, which
	finite grids admit families of polar secants producing regular
	orthogonality hypergraphs and parity proofs?
\end{question}

Finally, the appearance of split quadrics suggests a possible
connection with tensor-product geometry. Projectively, a smooth split
quadric is a Segre surface
\[
\PP^1\times\PP^1\hookrightarrow\PP^3,
\]
whose two rulings correspond to the two factors. Thus the supporting
quadrics naturally carry a projective tensor-product structure. In
the present construction this structure need not be compatible with
the Euclidean inner product defining quantum orthogonality, and we
have therefore not assigned it an operational interpretation.

\begin{question}
	Does the interaction between the Segre structure associated with the
	supporting quadrics and the Euclidean polarity defining orthogonality
	admit a natural quantum-mechanical interpretation?
\end{question}

Taken together, these questions suggest that finite Kochen--Specker
configurations in dimension four may carry substantially more
projective geometry than is visible from their orthogonality
hypergraphs alone. The dual-grid viewpoint provides one systematic
way of exposing this geometry, with polarity mediating between
projective incidence and quantum orthogonality.

\section*{Acknowledgments}
The author is a member of GNSAGA-INdAM.   The work of the author was supported by the funding PREMIO\_SINGOLI\_RIC\_[2025] from the Department of Engineering, University of Palermo. The author used Chat-GPT (OpenAI) for assistance with improving the readability of the manuscript and polishing
the accompanying code. The author assumes responsibility for all content.

\appendix

\section*{Supplementary Material}

\section{Coordinates and verification for the Peres configuration}
\label{app:cabello}

We record here the explicit coordinates and computations used in
Section~\ref{sec:cabello} to identify the dual-grid configuration
$Z_{3,3}$ with the Cabello $18$--$9$ configuration inside the Peres
$24$-ray configuration. The calculations also provide a direct
coordinate verification of the $(3,3)$-grid structure appearing in
Lemma~\ref{lem:D4-minus-trisecant} and of the orthogonal equivalence
in Theorem~\ref{thm:cabello-identification}.

We use the following realization of the Peres $24$-ray configuration:

\[
\begin{aligned}
P_1&=(2,0,0,0),&
P_2&=(0,2,0,0),&
P_3&=(0,0,2,0),&
P_4&=(0,0,0,2),\\
P_5&=(1,1,1,1),&
P_6&=(1,1,-1,-1),&
P_7&=(1,-1,1,-1),&
P_8&=(1,-1,-1,1),\\
P_9&=(1,1,1,-1),&
P_{10}&=(1,1,-1,1),&
P_{11}&=(1,-1,1,1),&
P_{12}&=(1,-1,-1,-1),
\end{aligned}
\]
and
\[
\begin{aligned}
P_{13}&=(1,1,0,0),&
P_{14}&=(1,-1,0,0),&
P_{15}&=(0,0,1,1),&
P_{16}&=(0,0,1,-1),\\
P_{17}&=(0,1,0,1),&
P_{18}&=(0,1,0,-1),&
P_{19}&=(1,0,1,0),&
P_{20}&=(1,0,-1,0),\\
P_{21}&=(1,0,0,1),&
P_{22}&=(1,0,0,-1),&
P_{23}&=(0,1,1,0),&
P_{24}&=(0,1,-1,0).
\end{aligned}
\]
Set
\[
\mathcal D=\{P_1,\ldots,P_{12}\},
\qquad
\mathcal D'=\{P_{13},\ldots,P_{24}\}.
\]
These are the two projectivized $D_4$ components of the
Peres--$F_4$ configuration.

Each component contains $16$ trisecants. The Euclidean polarity pairs
them into $16$ mutually orthogonal pairs, in the sense that every
point of one trisecant is orthogonal to every point of the other.
For the identification used in Section~\ref{sec:cabello}, we choose
\[
L=\{P_1,P_5,P_{12}\},
\qquad
L'=\{P_{16},P_{18},P_{24}\}.
\]
Their complements are the two nine-point sets
\[
\mathcal D\setminus L
=
\{P_2,P_3,P_4,P_6,P_7,P_8,P_9,P_{10},P_{11}\}
\]
and
\[
\mathcal D'\setminus L'
=
\{P_{13},P_{14},P_{15},P_{17},P_{19},P_{20},
  P_{21},P_{22},P_{23}\}.
\]

The six grid lines of $\mathcal D\setminus L$ split into the two
rulings
\[
\begin{aligned}
&(P_2P_7P_9),\quad
(P_3P_8P_{11}),\quad
(P_4P_6P_{10}),\\
&(P_2P_8P_{10}),\quad
(P_3P_6P_9),\quad
(P_4P_7P_{11}),
\end{aligned}
\]
while those of $\mathcal D'\setminus L'$ split as
\[
\begin{aligned}
&(P_{13}P_{17}P_{22}),\quad
(P_{14}P_{19}P_{23}),\quad
(P_{15}P_{20}P_{21}),\\
&(P_{13}P_{20}P_{23}),\quad
(P_{14}P_{17}P_{21}),\quad
(P_{15}P_{19}P_{22}).
\end{aligned}
\]
Thus both complements are $(3,3)$-grids, as asserted in
Lemma~\ref{lem:D4-minus-trisecant}.

For $Z_{3,3}$ we take $\alpha=\beta=2\pi/3$ and $c=\sqrt2$.
The orthogonal matrix
\[
R=
\begin{pmatrix}
0&1&0&0\\
\frac1{\sqrt3}&0&-\sqrt{\frac23}&0\\
\frac1{\sqrt3}&0&\frac1{\sqrt6}&-\frac1{\sqrt2}\\
\frac1{\sqrt3}&0&\frac1{\sqrt6}&\frac1{\sqrt2}
\end{pmatrix}
\]
maps the two grids $A$ and $B$ onto
$\mathcal D\setminus L$ and $\mathcal D'\setminus L'$, respectively.
More explicitly,
\[
\begin{array}{c|ccccccccc}
A_{ij}
&A_{00}&A_{01}&A_{02}&A_{10}&A_{11}&A_{12}&A_{20}&A_{21}&A_{22}\\
\hline
R(A_{ij})
&P_2&P_{10}&P_8&P_7&P_4&P_{11}&P_9&P_6&P_3
\end{array}
\]
and
\[
\begin{array}{c|ccccccccc}
B_{ij}
&B_{00}&B_{01}&B_{02}&B_{10}&B_{11}&B_{12}&B_{20}&B_{21}&B_{22}\\
\hline
R(B_{ij})
&P_{15}&P_{19}&P_{22}&P_{20}&P_{23}&P_{13}
&P_{21}&P_{14}&P_{17}.
\end{array}
\]

Under the same transformation, the nine canonical contexts are
mapped as follows:
\[
\begin{array}{c|c@{\qquad}c|c@{\qquad}c|c}
C_{00}&\{P_2,P_3,P_{21},P_{22}\}&
C_{01}&\{P_9,P_{10},P_{14},P_{15}\}&
C_{02}&\{P_6,P_8,P_{17},P_{19}\}\\
C_{10}&\{P_7,P_8,P_{13},P_{15}\}&
C_{11}&\{P_2,P_4,P_{19},P_{20}\}&
C_{12}&\{P_{10},P_{11},P_{22},P_{23}\}\\
C_{20}&\{P_9,P_{11},P_{17},P_{20}\}&
C_{21}&\{P_6,P_7,P_{21},P_{23}\}&
C_{22}&\{P_3,P_4,P_{13},P_{14}\}.
\end{array}
\]
These are exactly the nine orthogonal bases in the chosen
$18$-ray complement containing two rays from each $D_4$ component.

\subsection*{Exact verification}
For reproducibility, the SageMath \texttt{cabello\_peres.sage} code verifies
symbolically all of the preceding statements used in
Section~\ref{sec:cabello}. It checks that the $18$ dual-grid rays are
distinct and that the nine $C_{ij}$ are orthogonal contexts;
enumerates the $16$ trisecants in each $D_4$ component and the $16$
mutually orthogonal pairs; verifies that every corresponding
nine-point complement is a $(3,3)$-grid; checks that $R$ is
orthogonal and realizes the displayed ray correspondence; and
finally verifies that the images of the nine $C_{ij}$ are exactly
the nine orthogonal $2+2$ bases of the chosen complement. All
computations are performed symbolically in SageMath; no
floating-point approximation, numerical tolerance, or numerical
recognition is used.

\subsection*{Output}In the code we call a pair $(L,M)$ a Hexagon when $L$ is a
trisecant of $\mathcal D$, $M$ is a trisecant of $\mathcal D'$,
and every ray of $L$ is orthogonal to every ray of $M$.

{\footnotesize
\begin{verbatim}
Verified: the dual-grid configuration contains 18 distinct rays.
Verified: all nine C(i,j) are orthogonal contexts.
Verified: the Peres realization is split into two 12-ray components.
Verified: each D4 component contains 16 trisecants.
Verified: there are 16 mutually orthogonal trisecant pairs.
Verified: every orthogonal trisecant pair has two (3,3)-grid complements.
Verified: the Hexagon used in the paper is one of the 16 grid-complement Hexagons.
Verified: the chosen 18-ray complement contains exactly nine orthogonal 2+2 bases.
Verified: R is orthogonal.
Verified: R maps A bijectively onto D \ L.
Verified: R maps B bijectively onto D' \ L'.
Verified: the nine C(i,j) are exactly the nine orthogonal 2+2 bases of the chosen
          Peres complement.

Chosen Hexagon:
L  = (1, 5, 12)
L' = (16, 18, 24)

Rulings of D \ L:
([(2, 7, 9), (3, 8, 11), (4, 6, 10)], [(2, 8, 10), (3, 6, 9), (4, 7, 11)])

Rulings of D' \ L':
([(13, 17, 22), (14, 19, 23), (15, 20, 21)], [(13, 20, 23), (14, 17, 21), (15, 19, 22)])

Orthogonal matrix R:
[           0            1            0            0]
[ 1/3*sqrt(3)            0 -1/3*sqrt(6)            0]
[ 1/3*sqrt(3)            0  1/6*sqrt(6) -1/2*sqrt(2)]
[ 1/3*sqrt(3)            0  1/6*sqrt(6)  1/2*sqrt(2)]

A correspondence:
A(0, 0) -> P2
A(0, 1) -> P10
A(0, 2) -> P8
A(1, 0) -> P7
A(1, 1) -> P4
A(1, 2) -> P11
A(2, 0) -> P9
A(2, 1) -> P6
A(2, 2) -> P3

B correspondence:
B(0, 0) -> P15
B(0, 1) -> P19
B(0, 2) -> P22
B(1, 0) -> P20
B(1, 1) -> P23
B(1, 2) -> P13
B(2, 0) -> P21
B(2, 1) -> P14
B(2, 2) -> P17

Context correspondence:
C(0,0) -> [2, 3, 21, 22]
C(0,1) -> [9, 10, 14, 15]
C(0,2) -> [6, 8, 17, 19]
C(1,0) -> [7, 8, 13, 15]
C(1,1) -> [2, 4, 19, 20]
C(1,2) -> [10, 11, 22, 23]
C(2,0) -> [9, 11, 17, 20]
C(2,1) -> [6, 7, 21, 23]
C(2,2) -> [3, 4, 13, 14]
\end{verbatim}
}

The computation gives a direct verification of the explicit
identification used in Theorem~\ref{thm:cabello-identification}.
Each of the two $D_4$ components contains $16$ trisecants, and the
Euclidean orthogonality relation pairs them into $16$ mutually
orthogonal pairs.  It also confirms directly that the corresponding
complements are $(3,3)$-grids, in agreement with
Lemma~\ref{lem:D4-minus-trisecant}.

For the pair
$L=\{P_1,P_5,P_{12}\},
\ 
L'=\{P_{16},P_{18},P_{24}\},$
the orthogonal matrix $R$ displayed above maps the two grids
$A$ and $B$ bijectively onto
$\mathcal D\setminus L$ and $\mathcal D'\setminus L'$, respectively.
Moreover, the images of the nine contexts $C_{ij}$ are exactly the
nine orthogonal $2+2$ bases contained in this $18$-ray complement.
This gives the explicit verification of the orthogonal equivalence
used in Theorem~\ref{thm:cabello-identification}.

\section{Orthogonality data for the completed configurations}
\label{app:completed-orthogonality}

We record here the computational data concerning the full
orthogonality structure of the first three completed diagonal
configurations $\widehat Z_n$. These computations supplement the
canonical context families described in
Section~\ref{sec:completion} of the main paper.

Recall that $\widehat Z_n$ decomposes as
\[
\widehat Z_n=A\sqcup B\sqcup X_n\sqcup X'_n.
\]
The two canonical families of orthogonal contexts are
\[
\mathcal C=\{C_{ij}\}_{(i,j)\in\ZZ_n^2},
\qquad
\mathcal D=\{D_{r,s}\}_{(r,s)\in\ZZ_n^2}.
\]
The contexts in $\mathcal C$ have type $2A+2B$, while those in
$\mathcal D$ have type $A+B+X+X'$. Here the type of an orthogonal
basis records the number of its rays lying in the four components
$A$, $B$, $X_n$, and $X'_n$.

A direct enumeration of all orthogonal bases in the first three
diagonal cases gives
\[
\begin{array}{c|c|c|l}
	n
	& |\widehat Z_n|
	& \#\text{orthog. bases}
	& \text{distribution by type}
	\\
	\hline
	3
	& 24
	& 24
	& 9(2A+2B)+9(A+B+X+X')
	+3(3A+X)+3(3B+X')
	\\
	5
	& 60
	& 75
	& 50(2A+2B)+25(A+B+X+X')
	\\
	7
	& 112
	& 147
	& 98(2A+2B)+49(A+B+X+X')
\end{array}
\]

The total number of orthogonal bases depends on the Euclidean
realization, and hence on the admissible cyclic steps $(a,b)$.
For $n=5$ and $n=7$, an exhaustive enumeration of all admissible
pairs shows that only two values occur, namely $2n^2$ and $3n^2$.
The choice $(a,b)=(1,2)$ attains the maximal value $3n^2$ in both
cases. It is not the unique maximizing choice; the complete lists of
maximizing pairs are produced by the computation below.

Several features of the enumeration are worth noting. In each of
the three cases there are exactly $n^2$ bases of type
$A+B+X+X'$. These are precisely the contexts $D_{r,s}$ of
Theorem~\ref{thm:completion-contexts} in the main paper.

The canonical families do not, however, exhaust the full
orthogonality structure. For $n=3$, besides the nine canonical
contexts $C_{ij}$ of type $2A+2B$ and the nine contexts $D_{r,s}$
of type $A+B+X+X'$, there are six additional bases: three of type
$3A+X$ and three of type $3B+X'$. These are part of the exceptional
$F_4$ geometry described in Section~\ref{sec:exceptional} of the
main paper.

For $n=5$ and $n=7$, a different phenomenon occurs. There are
$2n^2$ orthogonal bases of type $2A+2B$, whereas the polar-secant
construction singles out only the $n^2$ canonical contexts
$C_{ij}$. Thus in both cases there is a second family of $n^2$
orthogonal bases involving only grid rays. This illustrates the
distinction, emphasized in the main paper, between the context
families canonically selected by the geometry and the full
orthogonality hypergraph of the underlying ray configuration.

For $n=5$ there is a further distinction between the orthogonality
structure of the original dual-grid configuration and its collinear
completion. In the $H_4$-compatible realization, the $50$ rays of
$Z_{5,5}$ are already orthogonally closed, even though the canonical
collinear completion adds ten further rays. This is verified
separately in Section~\ref{app:Z55-closure}.

\subsection*{Exact verification}

The SageMath code \texttt{completed\_orthogonality.sage} uses the explicit scalar-product
formulas rather than repeated symbolic simplification of the
coordinates. The required trigono\-metric values are represented as
exact algebraic numbers in \texttt{AA}/\texttt{QQbar}. The script
enumerates all orthogonal bases of the three realizations in the
table, verifies the canonical context families, and exhaustively
checks all admissible cyclic steps for $n=5$ and $n=7$. No
floating-point approximation or numerical tolerance is used.

\subsection*{Output}
{\footnotesize
\begin{verbatim}
	n = 3 a = 1 b = 1
	orthogonal bases: 24
	types: Counter({(2, 2, 0, 0): 9, (1, 1, 1, 1): 9, (0, 3, 0, 1): 3, (3, 0, 1, 0): 3})
	
	n = 5 a = 1 b = 2
	orthogonal bases: 75
	types: Counter({(2, 2, 0, 0): 50, (1, 1, 1, 1): 25})
	
	n = 7 a = 1 b = 2
	orthogonal bases: 147
	types: Counter({(2, 2, 0, 0): 98, (1, 1, 1, 1): 49})
	
	n = 5
	admissible pairs: 12
	possible numbers of bases: [50, 75]
	maximum: 75
	maximizing pairs: [(1, 2), (1, 3), (2, 1), (2, 4), (3, 1), (3, 4), (4, 2), (4, 3)]
	
	n = 7
	admissible pairs: 24
	possible numbers of bases: [98, 147]
	maximum: 147
	maximizing pairs: [(1, 2), (1, 5), (2, 1), (2, 3), (2, 4), (2, 6), (3, 2), (3, 5), (4, 2), 
                                	(4, 5), (5, 1), (5, 3), (5, 4), (5, 6), (6, 2), (6, 5)]
\end{verbatim}}
Thus, for these diagonal examples, the canonical projective
construction is unchanged while the full orthogonality hypergraph
varies with the Euclidean realization. In particular, the
$H_4$-compatible choice $(a,b)=(1,2)$ is a maximizing realization
for $n=5$, with $75=3\cdot5^2$ orthogonal bases.

This also shows that projectively equivalent realizations of the
same dual-grid incidence structure need not have the same full
orthogonality hypergraph. The projective construction determines the
canonical context families, whereas additional orthogonal bases may
appear for special Euclidean realizations.

\section{Orthogonal closure of the $Z_{5,5}$ realization}
\label{app:Z55-closure}

We record here the finite computation used in
Remark~\ref{rem:collinear-orthogonal-completion}. We consider
specifically the $H_4$-compatible realization of $Z_{5,5}$
determined by
\[
a=1,\qquad b=2,\qquad
\alpha=\frac{2\pi}{5},\qquad
\beta=\frac{4\pi}{5},\qquad
c=\phi^{-1},
\]
where $\phi=(1+\sqrt5)/2$.

The specification of the Euclidean realization is essential here.
As seen in Section~\ref{app:completed-orthogonality}, the full
orthogonality hypergraph of a dual-grid configuration may depend on
the admissible cyclic steps, even when the underlying projective
incidence construction is unchanged.

We say that a finite set of rays $Z\subset\PP^3(\RR)$ is
\emph{orthogonally closed} if, whenever three mutually orthogonal
rays of $Z$ are given, their common orthogonal complement also
belongs to $Z$. Since three mutually orthogonal rays in $\RR^4$
span a three-dimensional subspace, their common orthogonal
complement is a unique projective ray.

\begin{proposition}
	\label{prop:Z55-orthogonal-closure}
	The $H_4$-compatible realization of $Z_{5,5}$ with
	$(a,b)=(1,2)$ is orthogonally closed. More precisely, its $50$ rays
	contain $200$ mutually orthogonal triples, and each such triple has
	its unique orthogonal complement among the same $50$ rays. These
	triples form exactly $50$ orthogonal bases.
\end{proposition}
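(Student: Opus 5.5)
The plan is to reduce the statement to a finite computation on difference vectors. Translation invariance then collapses it to a handful of orbit representatives, each checked by hand in the field $\mathbb Q(\sqrt5)$.

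\textbf{Step 1: invariance.} The translations $(i,j)\mapsto(i+s,j+t)$ act on both grids simultaneously by the orthogonal maps $R_{s\alpha-t\beta}\oplus R_{s\alpha+t\beta}$. Hence every scalar product depends only on the difference $(\Delta i,\Delta j)=(i-k,j-\ell)$. From the coordinates of Section~\ref{sec:grids}, with $\theta^\pm=\Delta i\,\alpha\pm\Delta j\,\beta$, I will record the three scalar products
\[
\langle a_{ij},a_{k\ell}\rangle=\langle b_{ij},b_{k\ell}\rangle=\cos\theta^-+c^2\cos\theta^+,
\qquad
\langle a_{ij},b_{k\ell}\rangle=c(\cos\theta^--\cos\theta^+)=2c\sin(\Delta i\,\alpha)\sin(\Delta j\,\beta).
\]
Since $5$ is odd, the mixed product vanishes exactly when $\Delta i=0$ or $\Delta j=0$. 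So $A_{ij}\perp B_{k\ell}$ if and only if the two rays share a row index or a column index.

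\textbf{Step 2: the set $S$.} For the pure products I will determine the set $S\subset\ZZ_5^2\setminus\{0\}$ of differences with $\cos\theta^-+\phi^{-2}\cos\theta^+=0$. Here $\alpha=2\pi/5$, $\beta=4\pi/5$, and $\theta^\pm=2\pi(\Delta i\pm2\Delta j)/5$. Every cosine involved is one of $1$, $(\phi-1)/2$, $-\phi/2$, so the test for each of the $24$ differences is an identity in $\mathbb Q(\sqrt5)$. Proposition~\ref{prop:dual-grid-polar-secants} already places $\pm(1,1)$ and $\pm(1,-1)$ in $S$. I expect $S$ to be larger, consistent with the $2n^2$ bases of type $2A+2B$ recorded in Section~\ref{app:completed-orthogonality}. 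By symmetry under $(\Delta i,\Delta j)\mapsto-(\Delta i,\Delta j)$, $S=-S$.

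\textbf{Step 3: classify orthogonal triples by type.} Using $S$, I will sort the mutually orthogonal triples into four types.
\begin{itemize}
\item Type $3A$ (and likewise $3B$) requires $d_1,d_2,d_1-d_2\in S$. I will show $S$ contains no such triangle, so these types do not occur. This is consistent with the absence of types $3A+X$ in the data for $n=5$.
\item Type $2A+B$ means $\{A_0,A_d,B_e\}$ with $d\in S$. Here $e$ must share a row or column with both $0$ and $d$. When $d$ has both coordinates nonzero, this leaves exactly two choices of $e$, the two remaining corners of the rectangle.
\item Type $A+2B$ is symmetric to $2A+B$.
\end{itemize}
Counting with the $25$ translations should give $200$ triples, i.e.\ $8$ translation orbits, and each basis of type $2A+2B$ contains exactly four triples.

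\textbf{Step 4: completion and conclusion.} For each orbit representative I will check that the fourth corner of the rectangle is orthogonal to all three rays; the $A$--$B$ relations are automatic by Step 1. The remaining relation is a membership $d'\in S$ for the complementary difference. This mirrors the polar-secant computation for the $C_{ij}$, and for the second family it uses the additional differences in $S$. Since the common orthogonal complement of three mutually orthogonal rays is a unique ray, this shows that the complement lies in $Z_{5,5}$, so $Z_{5,5}$ is orthogonally closed. The $200$ triples group into $200/4=50$ bases, matching the $50$ bases of type $2A+2B$.

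\textbf{Main obstacle.} The delicate step is Step 2 together with ruling out type-$3A$ triangles. This needs the exact value set of $S$, which is the whole metric input of the $H_4$-compatible realization. I would compute it by tabulating $(\cos\theta^-,\cos\theta^+)$ for the $24$ differences using $\phi^2=\phi+1$. I would also cross-check the resulting counts against the exact enumeration in Section~\ref{app:completed-orthogonality}.
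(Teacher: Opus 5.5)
\textbf{The error.} Your strategy is a legitimate alternative to the paper's proof, which is only a SageMath enumeration of the orthogonality graph. But Step~1 contains a computational error, and it makes Step~4 fail as written.

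The two same-grid scalar products are \emph{not} equal for the same index difference. Since $b_{ij}=(c\,u(\theta^-_{ij}),u(\theta^+_{ij}))$, you get $\langle b_{ij},b_{k\ell}\rangle=c^2\cos\theta^-+\cos\theta^+$, whereas $\langle a_{ij},a_{k\ell}\rangle=\cos\theta^-+c^2\cos\theta^+$. Replacing $\Delta j$ by $-\Delta j$ swaps $\theta^+$ and $\theta^-$, so the two grids have different difference sets: $S_B=\{(\Delta i,-\Delta j):(\Delta i,\Delta j)\in S_A\}$. Proposition~\ref{prop:dual-grid-polar-secants} places $(1,1)$ in $S_A$ and $(1,-1)$ in $S_B$, not both in one set.

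Carry out your Step~2 with $m=\Delta i-2\Delta j$ and $p=\Delta i+2\Delta j$. The only solution pattern is $\cos(2\pi m/5)=1/(2\phi)$ with $\cos(2\pi p/5)=-\phi/2$, which gives exactly
\[
S_A=\{\pm(1,1),\ \pm(2,-2)\},\qquad S_B=\{\pm(1,-1),\ \pm(2,2)\}.
\]
So $S_A$ is not larger than four elements. Your own count already forces this: there are $25(|S_A|+|S_B|)$ triples, so $200$ triples means $|S_A|+|S_B|=8$. A single set with more than four elements would give at least $300$ triples.

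\textbf{Why Step~4 fails, and the fix.} Take the rectangle with $A$-diagonal $d$. Its two $B$-corners differ by $(d_1,-d_2)$. With a single set $S=S_A$, Step~4 asks for $(d_1,-d_2)\in S_A$, which is false for $d=(1,1)$ and for $d=(2,-2)$. Your argument would then conclude that even the canonical $C_{ij}$ are not orthogonal.

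With the correct $S_B$, the condition $(d_1,-d_2)\in S_B$ is equivalent to $d\in S_A$, so it holds automatically and every triple completes. The rest of your plan then goes through:
\begin{itemize}
\item $S_A$ is triangle-free: none of the six pairwise differences of its elements lies in $S_A$, so there are no triples of type $3A$, and likewise none of type $3B$.
\item There are $25\cdot4/2=50$ edges between $A$-rays. Every element of $S_A$ has both coordinates nonzero, so each such edge lies in exactly two triples of type $2A+B$. The same holds for $B$, giving $200$ triples in all.
\item These triples group into the $50$ bases $\{A_x,A_{x+d},B_{(x_1,x_2+d_2)},B_{(x_1+d_1,x_2)}\}$ with $d\in S_A$.
\item The rectangles with $d=\pm(1,1)$ are the $C_{ij}$. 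Those with $d=\pm(2,-2)$ form the second family of $25$ bases of type $2A+2B$ recorded in the supplementary data.
\end{itemize}
Once corrected, your route gives what the computer check does not: closure holds because each basis is a rectangle whose diagonals lie in $S_A$ and in its reflection $S_B$.
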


\begin{proof}
	The claim is verified by the SageMath code in \texttt{z55\_orthogonal\_closure.sage}. The
	script constructs the orthogonality graph directly from the
	scalar-product formulas of the dual-grid realization, enumerates all
	mutually orthogonal triples, and checks that each triple has exactly
	one common orthogonal ray among the original $50$ rays. It then
	identifies the resulting four-element sets and verifies that there
	are exactly $50$ distinct orthogonal bases.
\end{proof}

Thus the ten rays added in the passage
\[
Z_{5,5}\subset\widehat Z_5\simeq_{\mathrm{orth}} H_4^{\mathrm{proj}}
\]
are not forced by orthogonal closure. Their appearance is genuinely
a consequence of the collinear completion developed in
Section~\ref{sec:completion}. This provides a particularly sharp instance of the distinction between the projective completion and the full metric orthogonality structure.

Notice that the completed configuration $\widehat Z_5$ has $75$
orthogonal bases in this realization, as recorded in
Section~\ref{app:completed-orthogonality}. Thus the ten rays added
by collinear completion are not required to close the original
orthogonality structure, but once present they support $25$
additional orthogonal contexts.

\section{Exact verification of the $H_4$ identification}
\label{app:H4-verification}

We record here the explicit point correspondence and the exact
computation used in the proof of
Theorem~\ref{thm:H4-completion} of the main paper. Throughout this
section, equality of nonzero vectors is understood projectively. We use the numbering $P_1,\ldots,P_{60}$ of the projectivized
$H_4$ configuration adopted by Wi\'sniewska--Zi\k{e}ba with coordinates
coming from the realization of Harbourne--Migliore--Nagel--Teitler. 

The two
$25$-point grids may be displayed as
\[
G_1=
\begin{pmatrix}
	P_1   & P_{33} & P_{32} & P_{29} & P_{36}\\
	P_{31}& P_5    & P_{52} & P_{41} & P_{13}\\
	P_{34}& P_{58} & P_6    & P_{15} & P_{38}\\
	P_{35}& P_{37} & P_{14} & P_7    & P_{57}\\
	P_{30}& P_{16} & P_{42} & P_{51} & P_8
\end{pmatrix},
\qquad
G_2=
\begin{pmatrix}
	P_2   & P_{24} & P_{20} & P_{17} & P_{21}\\
	P_{19}& P_9    & P_{59} & P_{45} & P_{25}\\
	P_{23}& P_{55} & P_{10} & P_{27} & P_{43}\\
	P_{22}& P_{44} & P_{26} & P_{11} & P_{56}\\
	P_{18}& P_{28} & P_{46} & P_{60} & P_{12}
\end{pmatrix}.
\]
In each matrix, the five rows are the lines of one ruling of the
supporting quadric and the five columns are the lines of the other
ruling. Thus every row and every column consists of five collinear
points.

The remaining ten points are
\[
L_+=
\{P_3,P_{49},P_{50},P_{53},P_{54}\}
=\ell_{17}\cap H_4^{\mathrm{proj}},
\]
and
\[
L_-=
\{P_4,P_{39},P_{40},P_{47},P_{48}\}
=\ell_{24}\cap H_4^{\mathrm{proj}}.
\]

For the realization
\[
a=1,\qquad b=2,\qquad c=\phi^{-1},
\]
let $R$ be the orthogonal transformation displayed in
Theorem~\ref{thm:H4-completion}. The computation below verifies
exactly that
\[
R^{\mathsf T}R=I,
\qquad
R\bigl(\{A_{ij}\}\bigr)=G_1,
\qquad
R\bigl(\{B_{ij}\}\bigr)=G_2,
\]
and
\[
R(X_5)=L_+,
\qquad
R(X'_5)=L_-.
\]
Consequently,
\[
R(\widehat Z_5)=H_4^{\mathrm{proj}}.
\]

For completeness, the individual completion-ray correspondence is
\[
\begin{aligned}
	R(X_0)&=P_3,&
	R(X_1)&=P_{54},&
	R(X_2)&=P_{49},&
	R(X_3)&=P_{50},&
	R(X_4)&=P_{53},\\
	R(X'_0)&=P_4,&
	R(X'_1)&=P_{40},&
	R(X'_2)&=P_{47},&
	R(X'_3)&=P_{48},&
	R(X'_4)&=P_{39}.
\end{aligned}
\]

\subsection*{Exact verification}
The first part of the code \texttt{h4\_identification.sage} checks, over
$\mathbb Q(\sqrt5)$, the spectral data used in the proof of
Theorem~\ref{thm:H4-completion}: the duality of the two supporting
quadrics, their two eigenspaces, and the identification of
$\ell_{17}$ and $\ell_{24}$ with the corresponding spectral lines.
The second part passes to SageMath's field of algebraic numbers and
verifies the explicit orthogonal identification ray by ray.
Projective equality is tested by the vanishing of all
$2\times2$ minors of the corresponding vectors. No floating-point
approximation, numerical tolerance, or numerical recognition is used.

\subsection*{Output}
{\scriptsize\begin{verbatim}
	Verified spectral data:
	M2 = M1^(-1)
	ell_17 = P(E_{phi^(-1)})
	ell_24 = P(E_{-phi})
	
	FINAL EXACT VERIFICATION
	------------------------
	Verified: R^T R = I.
	Verified: R(A) = G1.
	Verified: R(B) = G2.
	Verified: R(X_5) = ell_17 cap H4^proj.
	Verified: R(X'_5) = ell_24 cap H4^proj.
	Verified: R(Zhat_5) = H4^proj.
\end{verbatim}}

\section{Generic projection of the maximizing $\widehat Z_7$ realization}
\label{app:Z77-geproci}

We record here the computation used in
Subsection~\ref{subsec:further-questions} of the main paper to test
the geproci property of the completed configuration $\widehat Z_7$.
We use the admissible realization
\[
a=1,\qquad b=2,
\]
which, as shown in
Section~\ref{app:completed-orthogonality}, attains the maximal number
$147=3\cdot7^2$ of orthogonal bases among the admissible cyclic
realizations considered there.

For this choice,
\[
\alpha=\frac{2\pi}{7},
\qquad
\beta=\frac{4\pi}{7},
\]
and
\[
c^2
=
-\frac{\cos(\alpha-\beta)}{\cos(\alpha+\beta)}
=
\frac{\cos(2\pi/7)}{\cos(\pi/7)}.
\]
The completed configuration consists of 
$49+49+7+7=112$ 
points.

If a reduced set of $112$ points in $\PP^2$ is a complete
intersection of curves of degrees $d\leq e$, then $de=112$.
Consequently,
\[
(d,e)\in
\{(1,112),(2,56),(4,28),(7,16),(8,14)\}.
\]
In particular, every such complete intersection is contained in a
nonzero curve of degree at most $8$. Thus, to disprove the geproci property, it is enough to exhibit one projection with distinct images for which $I_8=0$ and then
use upper semicontinuity.

\subsection*{Exact verification}

The SageMath code \texttt{z77\_geproci.sage} constructs the $112$ points of
$\widehat Z_7$, applies an explicit linear projection to $\PP^2$,
checks that the projected points remain distinct, and computes the
dimensions of the spaces $I_d$ for $7\leq d\leq14$. All
computations are performed in exact algebraic arithmetic.

\subsection*{Output}

The relevant part of the computation gives
\[
\begin{array}{c|c|c}
	d & \dim I_d
	& \dim I_d\text{ for a CI}(8,14)
	\\
	\hline
	7  & 0  & 0\\
	8  & 0  & 1\\
	9  & 0  & 3\\
	10 & 1  & 6\\
	11 & 3  & 10\\
	12 & 6  & 15\\
	13 & 10 & 21\\
	14 & 18 & 29
\end{array}
\]

The decisive discrepancy occurs already in degree $8$:
\[
\dim I_8=0.
\]
Hence the projected configuration is not contained in any nonzero
curve of degree at most $8$, and therefore cannot be a complete
intersection of $112$ reduced points.

Finally, this computation determines the behavior of a general
projection. As the center of projection varies in the open set for
which the $112$ images remain distinct, the dimension
 $h^0\!\left(
\mathcal I_{\pi_P(\widehat Z_7)}(8)
\right)$
is upper semicontinuous. Since it vanishes for the explicit
projection above, it vanishes on a nonempty open set of projection
centers. Therefore a general projection of this realization of
$\widehat Z_7$ is not a complete intersection, and hence
the maximizing realization of $\widehat Z_7$ is not geproci.

\end{document}